\newcommand{\modified}[1]{#1}
\title{On the Gap between Hereditary Discrepancy and the Determinant Lower Bound}
\author{Lily Li\thanks{University of Toronto, Department of Computer Science
  (\href{mailto:xinyuan@cs.toronto.edu}{xinyuan@cs.toronto.edu}).}
\and Aleksandar Nikolov\thanks{University of Toronto, Department of Computer Science (\href{mailto:anikolov@cs.toronto.edu}{anikolov@cs.toronto.edu}).}}
\begin{document}

\maketitle

\begin{abstract}
The determinant lower bound of Lov\'{a}sz, Spencer, and Vesztergombi
[European Journal of Combinatorics, 1986] is a general way to
prove lower bounds on the hereditary discrepancy of a set system. In
their paper, Lov\'{a}sz, Spencer, and Vesztergombi asked if hereditary
discrepancy can also be bounded from above by a function of the determinant lower bound. This was answered in the negative by Hoffman,
and the largest known multiplicative gap between the two quantities
for a set system of $m$ subsets of a universe of size $n$ is on the order
of $\max\{\log n,\sqrt{\log m}\}$. On the other hand, building upon work of Matou\v{s}ek [Proceedings of the AMS, 2013], Jiang and Reis [SOSA, 2022] showed that this gap is always bounded up to constants by $\sqrt{\log(m)\log(n)}$. This is tight when $m$ is polynomial in $n$, but leaves open the case of large $m$. We show that the bound of Jiang and Reis is tight for nearly the entire range of $m$. Our proof amplifies the discrepancy lower bounds of a set system derived from the discrete Haar basis via Kronecker products.
\end{abstract}

\section{Introduction}
Let $X$ be a finite universe of elements, $\mathcal{S}$ a collection of subsets of $X$ called a \emph{set system}, and $\chi: X \rightarrow \{\pm 1\}$ a $\pm 1$ coloring of the elements of $X$. The discrepancy of $\mathcal{S}$ with respect to $\chi$, denoted $\disc(\mathcal{S}, \chi)$, is defined as $\max_{S \in \mathcal{S}}||\chi^{-1}(1)\cap S| - |\chi^{-1}(-1)\cap S||$ i.e.\ the largest difference between the number of elements colored differently in any set $S \in \mathcal{S}$. The \emph{discrepancy of $\mathcal{S}$}, denoted $\disc(\mathcal{S})$, is then $\min_{\chi: X \rightarrow \{\pm 1\}}\disc(\mathcal{S}, \chi)$ i.e.\ among all $\pm 1$ colorings of the elements of $X$, the least unequal we can make the most unequal set of $\mathcal{S}$. If $|X| = n$ and $|\mathcal{S}| = m$, let $X = [n]$ and $\mathcal{S} = \{S_1, ..., S_m\}$. For each set system $\mathcal{S}$, we can construct an associated $m \times n$ incidence matrix $\mm{A}_{\mathcal{S}}$: the entry in row $i$ and column $j$ of $\mm{A}_{\mathcal{S}}$ is equal to one if element $j$ is in $S_i$ and zero otherwise. We define the \emph{discrepancy of a real-valued matrix \(\mm{A}\)} as 
\[\disc(\mm{A}) \coloneqq \min_{\mm{x} \in \{\pm 1\}^{n}} \norm{\mm{A}\mm{x}}_{\infty},\] 
where $\norm{\cdot}_{\infty}$ denotes the $L_\infty$-norm of a vector. Using this definition, we see that $\disc(\mm{A}_{\mathcal{S}}) = \disc(\mathcal{S})$. Throughout this exposition, we will use $\mathcal{S}$ and its indicator matrix $\mm{A}_{\mathcal{S}}$ interchangeably.

We would like discrepancy to be a robust quantity, but it can be sensitive to slight modifications to the incidence matrix e.g. the discrepancy of the matrix $[\mm{A}, \mm{A}]$ is always zero regardless of $\disc(\mm{A})$. \modified{This motivates the definition of the hereditary discrepancy of a matrix $\mm{A}$ as
\[\herdisc(\mm{A}) \coloneqq \max_{\mm{B} \in \mathcal{S}}\disc(\mm{B}),\] 
where $\mathcal{S}$ is the set of all sub-matrices of $\mm{A}$. Notice that adding rows to a matrix can never decrease its discrepancy so it suffices for $\mathcal{S}$ to consist of sub-matrices whose columns are a subset of the {columns} of $\mm{A}$.} In a sense, this definition generalizes total unimodularity. It is easy to show that totally unimodular matrices (TUM)\footnote{A matrix $\mm{A}$ is TUM if every square submatrix of $\mm{A}$ has determinant in $\{-1, 0, 1\}$. A linear systems of the form $\mm{A}\mm{x} \geq \mm{b}$ for TUM $\mm{A}$, integral $\mm{b}$, and $0 \le \mm{x}$ has an integral polyhedron as its feasible region.} have hereditary discrepancy at most one~\cite{schrijver1998theory}. Further, a result of Ghouila-Houri~\cite{ghouila1962caracterisation} states that the set of hereditary discrepancy one matrices with entries in $\{-1, 0, 1\}$ is exactly the set of TUM matrices. 

An important early work in discrepancy theory of Lov\'{a}sz, Spencer, and Vesztergombi~\cite{Lovasz1986discrepancy} showed that the determinant lower bound of $\mm{A}$, \modified{defined as
\[\detlb(\mm{A}) \coloneqq \max_{k}\max_{\mm{B}\in \mathcal{S}_k}\left|\mathrm{det}(\mm{B})\right|^{1/k},\]
where $\mathcal{S}_{k}$ denotes the set of all $k\times k$ sub-matrices of $\mm{A}$}, satisfies $2\herdisc(\mm{A}) \geq \detlb(\mm{A})$. This, again, generalizes what happens with totally unimodular matrices, for which both quantities are equal to one. The determinant lower bound has since become a powerful tool in proving nearly tight lower bounds on many natural and important set systems, e.g. axis-aligned boxes and point-line incidences \cite{chazellelvov2001discrepancy, matousek18factorization}. Given that the determinant lower bound often implies nearly tight discrepancy lower bounds, it is natural to ask how far it can be from the hereditary discrepancy. The first result in this direction is due to Matou\v{s}ek~\cite{matouvsek2013determinant} who showed that the ratio between hereditary discrepancy and the determinant lower bound is bounded from above as
\[\frac{\herdisc(\mm{A})}{\detlb(\mm{A})} \lesssim \log(2mn)\cdot\sqrt{\log(2n)}.\]
Here we used the notation $a \lesssim b$ to denote the the existence of a universal constant $c$ such that $a \leq c\cdot b$. Similarly, $a \gtrsim b$ denotes the existence of a universal constant $c > 0$ such that $a \geq c\cdot b$, and $a\cong b$ when $a\lesssim b$ and $a\gtrsim b$.

Matou\v{s}ek's bound was not believed to be tight as the largest known
value of \(\frac{\herdisc(\mm{A})}{\detlb(\mm{A})}\) is on the order
of \(\log n\). Both the large discrepancy three
permutations family of~Newman, Neiman, and
Nikolov~\cite{newman2012beck}  (see also \cite{franks2018simplified})
and a construction due to P{\'a}lv{\"o}lgyi~\cite{palvolgyi2010indecomposable} achieve this gap.
 Matou\v{s}ek's bound follows from a pair of inequalities
\begin{align}
    \herdisc(\mm{A}) \lesssim \log(2mn)\cdot\hervecdisc(\mm{A})\label{eq:matousek-eq1},\\
    \hervecdisc(\mm{A}) \lesssim \sqrt{\log 2n}\cdot\detlb(\mm{A}),\label{eq:matousek-eq2}
\end{align}
where the first inequality is implied by the seminal result of Bansal~\cite{bansal2010constructive}, and the second inequality is proved using duality. 
Note that $\vecdisc$ is the \emph{vector discrepancy} of a set system, or matrix. This quantity is similar to discrepancy but the elements of the universe are ``colored'' by vectors rather than by $\pm 1$. In particular, for an ${m\times n}$ matrix $\mm{A}$, 
\[\vecdisc(\mm{A}) = \min_{\mm{v}_1, \ldots, \mm{v}_n \in S^{n-1}}\max_{j \in [m]}\left\|\sum_{i \in [n]} A_{j,i}\cdot \mm{v}_i\right\|_2,
\]
where $S^{n-1}$ is the unit sphere in $\RR^n$.
Note that vector discrepancy is a lower bound on discrepancy since a coloring $\chi: X \rightarrow \{\pm 1\}$ can be interpreted as a set of vectors where all vectors are parallel to each other.
The \emph{hereditary vector discrepancy of $\mm{A}$}, denoted $\hervecdisc$, which appears in \eqref{eq:matousek-eq1}, is the maximum vector discrepancy of any subset of the columns of $\mm{A}$. 

Recently Jiang and Reis~\cite{jiang2022tighter} were able to improve Matou\v{s}ek's result by showing that $\frac{\herdisc(\mm{A})}{\detlb(\mm{A})} \lesssim \sqrt{\log 2m\log 2n}$. Their work left open whether the \(\sqrt{\log m}\) term can be replaced by \(\sqrt{\log n}\) for large \(m\).
In the present work, we show that this is mostly not possible, and that the factor of $\sqrt{\log m}$ is necessary for all $m$ in the range $n \leq m \leq 2^{n^{1-\epsilon}}$ for any constant $\epsilon > 0$. Note that when $m > 2^n$, the matrix contains duplicated rows whose removal will not change the value of $\herdisc(\mm{A})$ nor $\detlb(\mm{A})$. Thus, our lower bound covers nearly the whole range of values for $m$.

Our main result is stated in the next theorem. Its proof appears at the end of Section~\ref{sec:haarbasis}.
\begin{theorem}\label{thm:main}
For any real number $\epsilon \in (0, 1)$, any integers $n \ge 2$ and $m \in \left[n, 2^{n^{1-\epsilon}}\right]$, there exists a matrix $\mm{A} \in \{0, 1\}^{m\times n}$ such that
\begin{equation}\label{eq:main}
    \frac{\herdisc(\mm{A})}{\detlb(\mm{A})} \gtrsim \sqrt{\log m \log n}.
\end{equation}
\end{theorem}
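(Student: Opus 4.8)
Appending an all-zero row or an all-zero column to a matrix changes neither $\herdisc$ nor $\detlb$ --- a zero row or column never contributes to $\norm{\mm{A}\mm{x}}_\infty$, and any square submatrix using one is singular --- so it suffices to exhibit, for each admissible $(\epsilon,n,m)$, a matrix $\mm{A}_0\in\{0,1\}^{m_0\times n_0}$ with $m_0\le m$, $n_0\le n$ and $\herdisc(\mm{A}_0)\gtrsim\sqrt{\log m\log n}\cdot\detlb(\mm{A}_0)$, and then pad it with zero rows and columns up to size $m\times n$. The core $\mm{A}_0$ will be a Kronecker power $\mm{H}^{\otimes t}$ of a fixed matrix $\mm{H}\in\{0,1\}^{M_0\times 2^k}$ built from the discrete Haar basis on $[2^k]$: one starts from the $2^k$ Haar vectors --- the all-ones vector and the $\pm1$ vectors supported on dyadic intervals --- and takes indicator vectors of $M_0$ carefully chosen sets assembled from their supports. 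The parameters are arranged so that $n_0=2^{kt}\le n$, $m_0=M_0^{\,t}\le m$, $kt\cong\log_2 n$ and $M_0^{\,t}\cong m$; this is feasible for every $m$ in the stated range precisely because $m\le 2^{n^{1-\epsilon}}$ keeps $\log M_0=(\log m)/t$ at most $2^k$ (so that $M_0\le 2^{2^k}$), with $t$ of constant order when $m$ is near the top of its range and $t\cong\log n$ when $m$ is polynomial in $n$. We choose $\mm{H}$ so that (a) Matou\v{s}ek's analysis of the Haar basis makes \eqref{eq:matousek-eq2} tight for it, $\hervecdisc(\mm{H})\gtrsim\sqrt{k}\cdot\detlb(\mm{H})$, and (b) $\mm{H}$ has enough ``generic'' rows that its integrality gap is genuine, $\herdisc(\mm{H})\gtrsim\sqrt{\log M_0}\cdot\hervecdisc(\mm{H})$, in the spirit of the lower bound matching Banaszczyk's theorem. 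The whole point is that both gains persist --- and amplify --- under the Kronecker power.

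First, hereditary vector discrepancy is super-multiplicative under Kronecker products. This is cleanest for the factorization norm $\gamma_2$, which satisfies $\gamma_2(\mm{B}\otimes\mm{C})=\gamma_2(\mm{B})\,\gamma_2(\mm{C})$ exactly, and the matching statement for $\hervecdisc$ follows from the semidefinite-programming dual: a Kronecker product of PSD dual certificates for $\vecdisc(\mm{B})$ and $\vecdisc(\mm{C})$ is a PSD dual certificate for $\vecdisc(\mm{B}\otimes\mm{C})$ with the product objective value --- using the elementary fact that $0\preceq\mathrm{diag}(y)\preceq M$ and $0\preceq\mathrm{diag}(y')\preceq M'$ imply $\mathrm{diag}(y)\otimes\mathrm{diag}(y')\preceq M\otimes M'$ --- and restricting to the optimal submatrices gives $\hervecdisc(\mm{B}\otimes\mm{C})\ge\hervecdisc(\mm{B})\,\hervecdisc(\mm{C})$, hence $\hervecdisc(\mm{H}^{\otimes t})\ge\hervecdisc(\mm{H})^{t}$. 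Second, the $m_0=M_0^{\,t}$ rows of $\mm{H}^{\otimes t}$ remain ``spread out'' enough (this property being inherited from $\mm{H}$) for a counting/entropy argument, so that $\herdisc(\mm{H}^{\otimes t})\gtrsim\sqrt{\log m_0}\cdot\hervecdisc(\mm{H}^{\otimes t})$. Combined with feature (a) this yields $\herdisc(\mm{H}^{\otimes t})\gtrsim\sqrt{\log m}\cdot\hervecdisc(\mm{H})^{t}\gtrsim\sqrt{\log m}\cdot\bigl(\sqrt{k}\,\detlb(\mm{H})\bigr)^{t}$.

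The complementary --- and harder --- estimate is an \emph{upper} bound $\detlb(\mm{H}^{\otimes t})\lesssim\hervecdisc(\mm{H})^{t}/\sqrt{\log n}$, i.e.\ every square submatrix of $\mm{H}^{\otimes t}$ has determinant small relative to $\hervecdisc(\mm{H})^{t}$ --- a factor $\sqrt{\log n}$ below the generic $\gamma_2$-type bound, equivalently \eqref{eq:matousek-eq2} stays tight for the power. The obstacle is that a square submatrix of $\mm{H}^{\otimes t}$ need not be a Kronecker product of square submatrices of $\mm{H}$. We use the recursive block structure of the discrete Haar basis: every submatrix of $\mm{H}$ can be brought, by column operations, into a block-triangular form whose diagonal blocks are localized to single dyadic cells and hence have controlled determinant; because this reduction runs coordinate by coordinate it is compatible with the Kronecker power, and multilinearity of the determinant along each of the $t$ tensor coordinates then bounds $\lvert\mathrm{det}(\cdot)\rvert^{1/\ell}$ for every $\ell\times\ell$ submatrix. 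Putting the two estimates together, $\herdisc(\mm{H}^{\otimes t})/\detlb(\mm{H}^{\otimes t})=\bigl(\herdisc/\hervecdisc\bigr)\cdot\bigl(\hervecdisc/\detlb\bigr)\gtrsim\sqrt{\log m}\cdot\sqrt{\log n}$ for $\mm{H}^{\otimes t}$, and the parameter choices place its dimensions inside $m\times n$, proving the theorem after padding.

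The main obstacle is the determinant bound of the third paragraph: arbitrary submatrices of a Kronecker power of the Haar matrix do not respect the tensor structure, so one genuinely needs a structural block decomposition of Haar submatrices that simultaneously controls determinants and tensorizes. A secondary difficulty is making the two gains compound rather than interfere --- the $\sqrt{\log n}$ gain from the Haar/vector-discrepancy side, accumulated $\sqrt{k}$ per Kronecker level, and the single $\sqrt{\log m}$ gain from the abundance of rows --- which is exactly what dictates the interleaving of $k$, $t$ and $M_0$.
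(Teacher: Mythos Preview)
Your plan diverges substantially from the paper's, and two of its load-bearing steps are not proved and do not look provable along the lines you sketch. The first is the claim $\herdisc(\mm{H}^{\otimes t})\gtrsim\sqrt{\log m_0}\cdot\hervecdisc(\mm{H}^{\otimes t})$, justified only by a reference to a ``counting/entropy argument''. This asks a specific matrix to witness a $\sqrt{\log m}$ gap in inequality~\eqref{eq:matousek-eq1}; but $\hervecdisc$ is itself the standard certificate for discrepancy \emph{lower} bounds, so to beat it you need a genuinely different lower-bound mechanism, and you supply none. (The paper even remarks that the tightness of~\eqref{eq:matousek-eq1} is open.) The second is the determinant upper bound on $\mm{H}^{\otimes t}$, which you yourself flag as ``the main obstacle'' and treat only heuristically; arbitrary square submatrices of a Kronecker \emph{self}-power do not respect the tensor structure, and the block-triangular reduction you describe for a single $\mm{H}$ does not obviously tensorize in a way that controls $|\det(\cdot)|^{1/\ell}$ across all $\ell$.

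The paper sidesteps both difficulties with a different and much simpler construction: a \emph{single} Kronecker product $\mm{P}_N\otimes\mm{A}_k$, where $\mm{P}_N\in\{0,1\}^{2^N\times N}$ is the power-set incidence matrix and $\mm{A}_k$ is the $2^k\times 2^k$ Haar matrix. On the determinant side, the columns of $\mm{P}_N\otimes\mm{A}$ fall into $N$ blocks, each a vertical stack of copies of $\mm{A}$ and zero blocks, so each block has $\detlb\le\detlb(\mm{A})$ and Matou\v{s}ek's Lemma~4 gives $\detlb(\mm{P}_N\otimes\mm{A})\le\sqrt{eN}\,\detlb(\mm{A})$ directly --- no recursion over tensor levels. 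On the discrepancy side, the paper never touches $\hervecdisc$: it uses $\disc_1$ instead, proving by an elementary pigeonhole/sign argument that $\disc(\mm{P}_N\otimes\mm{A})\ge\frac{N}{2}\disc_1(\mm{A})$. What remains is to show, for a single matrix, that $\disc_1(\mm{A}_k)\cong\sqrt{k}$ (a symmetry argument reduces $\mm{A}_k\mm{x}$ to a sum of $k$ Rademachers) and $\detlb(\mm{A}_k)\le 2$ (a short cofactor induction). With $N\approx\log_2(m/n)$ and $2^k\approx n^\epsilon$ this yields the ratio $\gtrsim\sqrt{Nk}\cong\sqrt{\log m\log n}$; replacing $\mm{A}_k$ by the stacked $\{0,1\}$ indicator matrix $\mm{A}_k^{\pm}$ handles the $\{0,1\}$ requirement. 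The conceptual difference from your plan is that the $\sqrt{\log m}$ gain comes from $\mm{P}_N$ and $\disc_1$-amplification rather than from any $\herdisc/\hervecdisc$ gap, and the $\sqrt{\log n}$ gain comes from the $\disc_1/\detlb$ gap of one Haar matrix rather than from tensorizing the $\hervecdisc/\detlb$ gap.
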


\modified{Note that the lower bound in Theorem~\ref{thm:main} only holds for $m \le 2^{n^{1-\epsilon}}$ for an arbitrarily small but fixed constant $\varepsilon$. This leaves open whether such a lower bound holds all the way to $m = 2^n$. The next theorem gives a new upper bound on $\herdisc(\mm{A})$ in terms of $\detlb(\mm{A})$, which implies that Theorem~\ref{thm:main} cannot be extended to $m = 2^{\omega(n/\log n)}$.
\begin{theorem}\label{thm:herdisc-ub}
    For all positive integers $m$ and $n$, and all matrices $\mm{A} \in \RR^{m\times n}$, we have 
    \[
    \frac{\herdisc(\mm{A})}{\detlb(\mm{A})} \lesssim \sqrt{n}.
    \]
\end{theorem}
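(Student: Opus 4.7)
The plan is to prove $\herdisc(\mm{A})\lesssim \sqrt{n}\cdot\detlb(\mm{A})$ by a two-regime case analysis on the size of $m$ relative to $n$. By homogeneity we may rescale so that $\detlb(\mm{A})=1$; this forces $|A_{ij}|\le 1$ for every entry, since each $1\times 1$ submatrix is an entry in absolute value. Because $\detlb$ is non-increasing when we pass to a column-subset of $\mm{A}$ (every submatrix of a column-subset of $\mm{A}$ is itself a submatrix of $\mm{A}$), it suffices to bound $\disc(\mm{A})\lesssim\sqrt{n}$.

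In the regime $\log m\lesssim n/\log n$, the Jiang--Reis bound~\cite{jiang2022tighter} already suffices:
\[
\disc(\mm{A})\lesssim \sqrt{\log m\cdot\log n}\cdot\detlb(\mm{A})\lesssim \sqrt{n}\cdot\detlb(\mm{A}).
\]
In the complementary regime $\log m\gtrsim n/\log n$, the matrix has super-polynomially many rows and the $\sqrt{\log m}$ loss from Jiang--Reis is too large. The plan is to find a row-subset $\mm{A}'$ of $\mm{A}$ with $m'\le 2^{O(n/\log n)}$ rows satisfying (i) $\detlb(\mm{A}')\gtrsim \detlb(\mm{A})$, and (ii) any $\pm 1$ coloring whose discrepancy against $\mm{A}'$ is small also has discrepancy at most a constant factor larger against all of $\mm{A}$. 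Applying Jiang--Reis to $\mm{A}'$ then yields $\disc(\mm{A})\lesssim\sqrt{\log m'\cdot\log n}\cdot\detlb(\mm{A})\lesssim \sqrt{n}\cdot\detlb(\mm{A})$, as desired.

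The main obstacle is the construction of $\mm{A}'$, because properties (i) and (ii) must hold simultaneously with $m'$ small. A natural first attempt is to cluster rows via an $\varepsilon$-net in $\ell_\infty$ at scale $\varepsilon\sim 1/\sqrt{n}$, keeping one representative per cluster; any two rows in the same cluster differ by at most $\sqrt{n}\cdot\varepsilon=O(1)$ under any $\pm 1$ coloring, securing (ii) by the triangle inequality. The difficulty is that such a net has up to $(O(\sqrt{n}))^{n}=2^{O(n\log\log n)}$ classes, exceeding the $2^{O(n/\log n)}$ budget. Overcoming this requires exploiting additional structure of $\mm{A}$, such as restricting the clustering to the low-dimensional subspace spanned by the rows of the $\detlb$-witnessing submatrix, which is the technically delicate step in the proof.
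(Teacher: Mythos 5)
Your regime~1 (when $\log m \lesssim n/\log n$) is correct: the Jiang--Reis bound $\herdisc(\mm{A}) \lesssim \sqrt{\log m\cdot \log n}\cdot\detlb(\mm{A})$ directly gives $\lesssim \sqrt{n}\cdot\detlb(\mm{A})$ in that range, and applying it to column-submatrices handles the $\herdisc$ maximum. The problem is regime~2, which you do not actually prove. You reduce it to constructing a row-subset $\mm{A}'$ with $m' \le 2^{O(n/\log n)}$ rows that simultaneously (i) nearly preserves $\detlb$ and (ii) certifies discrepancy for all of $\mm{A}$, observe that the natural $\ell_\infty$-net fails (it gives $2^{O(n\log\log n)}$ rows, not $2^{O(n/\log n)}$), and then defer the repair to ``exploiting additional structure \ldots which is the technically delicate step.'' That deferred step is the entire content of the theorem in this regime, and the sketch of how to fill it is not credible as stated: the $\detlb$-witness is some $k\times k$ submatrix of $\mm{A}$ and does not single out any canonical low-dimensional subspace of $\RR^n$ in which the other rows of $\mm{A}$ should concentrate; nothing about $\detlb(\mm{A})=1$ forces the $m$ rows to cluster near such a subspace. (There is also an arithmetic slip in the net calculation: if two rows agree to within $\varepsilon$ in $\ell_\infty$, a $\pm1$ coloring can separate them by up to $n\varepsilon$, not $\sqrt{n}\,\varepsilon$, so one would need $\varepsilon\sim 1/n$, making the net even larger.)

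The paper's route is entirely different and avoids any row compression. It introduces the volume lower bound $\mathrm{volLB}$ and its polar variant $\mathrm{volLB}^*$, shows via the Blaschke--Santal\'o and reverse Santal\'o inequalities that $\mathrm{volLB}(\mm{A}) \cong \mathrm{volLB}^*(\mm{A})$, and then proves the key new inequality $\mathrm{volLB}^*(\mm{A}) \lesssim \sqrt{n}\cdot\detlb(\mm{A})$ by combining the minimum-volume (John) ellipsoid of the symmetrized row polytope with a Dvoretzky--Rogers/Ball-type determinant bound (Theorem~10 of Nikolov). The discrepancy bound then follows from the partial-coloring lemma of Dadush--Nikolov--Talwar--Tomczak-Jaegermann, which produces a coloring with $\|\mm{A}\mm{x}\|_\infty \lesssim \mathrm{volLB}(\mm{A})$ after $O(\log n)$ rounds of geometric decay. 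Crucially, this argument is not a case analysis and does not invoke Jiang--Reis at all; it works uniformly in $m$. You should either fill in regime~2 with a complete argument, or adopt a strategy like the paper's that produces a coloring directly from convex-geometric lower bounds rather than trying to reduce $m$.
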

This upper bound is based on the relationship between the volume lower bound on discrepancy studied in~\cite{dadush2018balancing}, and the determinant lower bound. In particular, we show that the volume lower bound is bounded by a constant multiple of $\sqrt{n}\cdot \detlb(\mm{A})$, and use a result from~\cite{dadush2018balancing} characterizing the hereditary discrepancy of partial colorings in terms of the volume lower bound. We also give a simpler proof for the special case of \(\mm{A} \in \{0,1\}^{m\times n}\) in the Appendix, using the theory of VC dimension.
}

Crucial to the proof of Theorem~\ref{thm:main} is a recursively
defined $2^{k} \times 2^k$ matrix $\mm{A}_k$, based on the Haar
wavelet basis. We will also show that $\mm{A}_k$ is tight for Equation~\eqref{eq:matousek-eq2}. 
\begin{theorem}\label{thm:tight-matousek-example}
    With $n = 2^k$ for an integer $k\ge 1$, $\hervecdisc(\mm{A}_k) \gtrsim \sqrt{\log n}\cdot\detlb(\mm{A}_k)$.
\end{theorem}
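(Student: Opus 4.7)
The plan is to establish the two inequalities $\detlb(\mm{A}_k)\lesssim 1$ and $\hervecdisc(\mm{A}_k)\gtrsim \sqrt{\log n}$ separately, from which Theorem~\ref{thm:tight-matousek-example} follows immediately.

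For the determinant upper bound, I would argue by induction on $k$ using the recursive block definition of $\mm{A}_k$. Any $j\times j$ submatrix of $\mm{A}_k$ decomposes according to how its chosen rows and columns hit the two halves of the Haar split at the top level; expanding its determinant (via Leibniz or an appropriate Schur complement) writes it as a bounded combination of products of determinants of smaller submatrices of $\mm{A}_{k-1}$. Applying the induction hypothesis then gives a uniform bound $|\det(\mm{B})|^{1/j}\lesssim 1$ across all square submatrices $\mm{B}$, and hence $\detlb(\mm{A}_k)\lesssim 1$.

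For the vector discrepancy lower bound, I would use semidefinite duality. Writing the vector discrepancy SDP in standard form and dualizing gives the bound
\[
\vecdisc(\mm{A}_k)^2 \;\geq\; n\cdot\sup_{\mu}\,\lambda_{\min}\!\left(\sum_{j}\mu_j\,\mm{a}_j\mm{a}_j^T\right),
\]
where the supremum is taken over probability distributions $\mu$ on the $2^k$ rows $\mm{a}_j$ of $\mm{A}_k$. I would then exhibit a distribution $\mu$ that groups the rows according to the $k=\log_2 n$ levels of the underlying Haar hierarchy and places total mass $\Theta(1/k)$ on each level (spread uniformly among the rows at that level), yielding $\lambda_{\min}(M_\mu)\gtrsim \log n/n$ for the weighted Gram matrix $M_\mu=\sum_j \mu_j\mm{a}_j\mm{a}_j^T$. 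This gives $\vecdisc(\mm{A}_k)^2\gtrsim \log n$, and hence $\hervecdisc(\mm{A}_k)\ge \vecdisc(\mm{A}_k)\gtrsim \sqrt{\log n}$.

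The principal obstacle is verifying the spectral bound $\lambda_{\min}(M_\mu)\gtrsim \log n/n$. The intuition is that Haar wavelets at different scales are, up to the $0/1$ reparameterization, orthogonal, so the rank-one contributions $\mm{a}_j\mm{a}_j^T$ at different levels superpose additively in essentially independent eigendirections; with each of the $k$ levels contributing $\Theta(1/n)$ to the bottom of the spectrum, the minimum eigenvalue accumulates to $\Theta(\log n/n)$. Making this heuristic rigorous requires a careful spectral analysis of $M_\mu$ via the recursive structure of $\mm{A}_k$ (controlling the cross-scale interactions introduced by shifting from $\pm 1$ Haar values to $0/1$ entries), and this constitutes the technical heart of the argument.
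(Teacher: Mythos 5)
Your first half (bounding $\detlb(\mm{A}_k)\lesssim 1$ by induction on the block structure of $\mm{A}_k$) is the same idea as the paper's Lemma~\ref{lem:ub-kunisky-detlb}, though you leave it as a sketch where the paper does a careful case analysis to get the explicit recursion $M_k(i)\le 2^j M_{k-1}(i-j)$ and hence $\detlb(\mm{A}_k)\le 2$; this part is fine in principle.

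The second half has a genuine gap, and it is not just a matter of missing details: the specific dual bound you propose cannot succeed. You want a distribution $\mu$ on rows with $\lambda_{\min}(M_\mu)\gtrsim (\log n)/n$, where $M_\mu=\sum_j\mu_j\mm{a}_j\mm{a}_j^\top$. But consider the $2^{k-1}$ internal nodes at the deepest level of the Haar tree (parents of leaves). Each such node $c$ lies on exactly two root-to-leaf paths, its column has exactly two nonzero entries, and the $2^k$ paths are partitioned into $2^{k-1}$ pairs by these nodes. Hence the diagonal entry $M_{\mu,cc}=\sum_{j: c\text{ on path }j}\mu_j$, averaged over such $c$, is $2^{1-k}=2/n$, so some $c$ has $M_{\mu,cc}\le 2/n$. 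Since $\lambda_{\min}(M_\mu)\le \mm{e}_c^\top M_\mu\mm{e}_c=M_{\mu,cc}$, you get $\lambda_{\min}(M_\mu)\le 2/n$ for every probability distribution $\mu$, and $n\lambda_{\min}(M_\mu)\le 2$. The $\lambda_{\min}$ lower bound can never reach $\log n$. (Your proposed $\mu$ is also not well-defined: the rows are root-to-leaf paths, each of which touches all levels, so they do not ``group by level''.) A variant of your idea can be salvaged: the full SDP dual is $\max_{\mu}\max\{\sum_i y_i: \mathrm{diag}(\mm{y})\preceq M_\mu\}$, and taking $\mu$ uniform makes $M_\mu$ \emph{diagonal} (the Haar columns are orthogonal), so choosing $\mm{y}$ equal to the diagonal of $M_\mu$ gives $\vecdisc(\mm{A}_k)^2\ge \mathrm{tr}(M_\mu)=k+1$. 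Relaxing to the uniform $\mm{y}=\lambda_{\min}(M_\mu)\mathbf{1}$ throws this away. The paper avoids duality entirely and argues directly: for any assignment of unit vectors to columns, one greedily follows a root-to-leaf path, at each internal node choosing the child whose sign makes the inner-product term nonnegative, so the squared norm of the accumulated signed sum increases by at least $1$ at each step and reaches $\ge k$ at a leaf; that leaf corresponds to a row, giving $\vecdisc(\mm{A}_k)^2\ge k$.
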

Its proof appears in Section~\ref{sec:othernotableproperties}. It is
yet unknown whether Equation~\eqref{eq:matousek-eq1} is tight as Jiang
and Reis improved upon Matou\v{s}ek's bound by circumventing the
inequality altogether. The resolution of this problem via an efficient
algorithm would imply new and old constructive bounds, for example,
the constructive version of Banaszczyk's upper bound for the
Beck-Fiala problem~\cite{banaszczyk1998balancing,bansal2016algorithm},
and a constructive version of Nikolov's upper bound for Tu\'snady's
problem~\cite{nikolov17tusnady}.

Our use of the matrix $\mm{A}_k$ is inspired by work of
Kunisky~\cite{kunisky2023discrepancy}, \modified{who first used this matrix in the context of proving discrepancy lower bounds}. We also observe that this
matrix gives easier proofs of other known results in discrepancy
theory: see Section~\ref{sec:othernotableproperties}.

Before we begin the proof proper, we define a few variants of discrepancy which will appear throughout this exposition. Just as $\disc(\mm{A})$ was defined in terms of $L_{\infty}$, we can define discrepancy in terms of other norms. In particular, for $L_1$,
\[\disc_1(\mm{A}) \coloneqq \min_{\mm{x} \in \{\pm 1\}^n}\frac{\norm{\mm{A}\mm{x}}_1}{m}\]
and generally for $L_p$,
\[\disc_p(\mm{A}) \coloneqq \min_{\mm{x} \in \{\pm 1\}^n} \left(\frac{\norm{\mm{A}\mm{x}}_p^p}{m}\right)^{1/p}.\]
Note that $\disc_p(\mm{A}) \leq \disc_q(\mm{A})$ when $p \leq q$.

\section{Proof Structure}
In order to prove Theorem~\ref{thm:main} we will find a family of
matrices which satisfy equation~\eqref{eq:main}. Our candidates will
have the form $\mm{P}_N \otimes \mm{A}$ where $\mm{P}_N$ is the
$2^{N}\times N$ incidence matrix of the power set, and $\mm{A}$ is some
$p \times p$ matrix with a gap between $\detlb(\mm{A})$ and
$\disc_1(\mm{A})$. \modified{In particular, we let $\mm{A}$ be the Haar
basis matrix used in the work of Kunisky~\cite{kunisky2023discrepancy}, and prove some properties of $\mm{A}$ in-order to obtain the present result.}

We bound $\detlb(\mm{P}_N \otimes \mm{A})$ from above by showing that $\detlb(\mm{P}_N \otimes \mm{A}) \lesssim \sqrt{N}\cdot\detlb(\mm{A})$ using standard linear algebra and Lemma 4 from \cite{matouvsek2013determinant}. See Lemma~\ref{lem:detlb-amplification}. For our choice of $\mm{A}$, we will show that $\detlb(\mm{A}) \lesssim 1$. We also bound $\disc(\mm{P}_N \otimes \mm{A}) \gtrsim N\cdot \disc_1(\mm{A})$ using a discrepancy amplification argument. See Lemma~\ref{lem:disc-amplification}. By finding a tight lower bound on $\disc_1(\mm{A})$, we obtain the lower bound $\disc(\mm{P}_N \otimes \mm{A}) \gtrsim N\cdot\sqrt{\log p}$. Taken together, these bounds gives us a gap on the order of $\sqrt{N}\cdot \sqrt{\log p}$ between $\detlb(\mm{P}_N \otimes \mm{A})$ and $\disc(\mm{P}_N \otimes \mm{A})$.

\begin{lemma}\label{lem:detlb-amplification} For the power matrix $\mm{P}_N$, and any real matrix $\mm{A}$, 
    \[\detlb\left(\mm{P}_N \otimes \mm{A}\right)\leq \sqrt{eN}\cdot\detlb(\mm{A}).\] 
\end{lemma}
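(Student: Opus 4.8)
The plan is to bound the determinant of an arbitrary square submatrix of $\mm{P}_N \otimes \mm{A}$ and take the appropriate root. Recall that the rows of $\mm{P}_N \otimes \mm{A}$ are indexed by pairs $(S, i)$ where $S \subseteq [N]$ and $i$ ranges over the rows of $\mm{A}$, and similarly columns are indexed by pairs $(j, \ell)$ where $j \in [N]$ and $\ell$ ranges over columns of $\mm{A}$. The key structural observation is that the row of $\mm{P}_N \otimes \mm{A}$ indexed by $(S, i)$ equals $\mathbf{1}_S^\top \otimes \mm{a}_i$, where $\mm{a}_i$ is the $i$-th row of $\mm{A}$; that is, it lies in the span of $\{\mm{e}_j^\top \otimes \mm{a}_i : j \in [N]\}$. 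So any $k \times k$ submatrix $\mm{B}$ has its rows living in a space of dimension at most $N \cdot (\#\text{distinct used } i)$, but more usefully: I would factor things through $\mm{A}$.

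First I would set up the following. Let $\mm{B}$ be a $k\times k$ submatrix of $\mm{P}_N\otimes \mm{A}$ whose rows are indexed by $(S_1,i_1),\dots,(S_k,i_k)$ and columns by $(j_1,\ell_1),\dots,(j_k,\ell_k)$. For each column index $t$, let $J_t = j_t$ and let $P = \{j_1,\dots,j_k\}$ be the set of distinct coordinates of $[N]$ that are used; say $|P| = r \le \min(k, N)$. Then $\mm{B} = \mm{M} \mm{N}$ where $\mm{M}$ is the $k \times (r \cdot c)$ matrix (with $c$ the number of distinct column-labels $\ell$ used, $c \le k$) whose $(S_t, i_t)$ row records, for each $(j,\ell) \in P \times (\text{used labels})$, the value $(\mathbf{1}_{S_t})_j$, and $\mm{N}$ picks out the relevant entries of $\mm{A}$... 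Actually the cleaner route: write $\mm{B} = \mm{R}\,(\mm{I}_P \otimes \mm{A}')$-style decompositions are fiddly. Instead I expect the paper uses the Cauchy–Binet / subadditivity route together with Matou\v{s}ek's Lemma 4, which presumably states something like: if every column of a matrix is a $\pm1$ (or $\{0,1\}$) combination relevant to a low-dimensional structure, then $\detlb$ is controlled. Concretely, the plan is: $|\det \mm{B}| \le \prod_{t=1}^k \|(\text{row } t \text{ of } \mm{B})\|_2$ is too weak; instead use that $\mm{B}$'s rows each factor as $\mathbf{1}_{S_t}^\top \otimes \mm{a}_{i_t}$ restricted to the chosen columns, hence $\mm{B} = \mm{U}\,\mm{V}$ with $\mm{U} \in \{0,1\}^{k \times (rk)}$ having rows of the form $\mathbf{1}_{S_t}$-blocked and $\mm{V}$ a block matrix built from $r$ copies of (a submatrix of) $\mm{A}$ placed along a block diagonal, padded by zeros. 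Then by Cauchy–Binet, $|\det \mm{B}| \le \binom{rk}{k} \max |\det \mm{U}_{\text{cols}}| \cdot \max|\det \mm{V}_{\text{rows}}|$, and the determinant of any $k\times k$ submatrix of the block-diagonal $\mm{V}$ is a product of minors of $\mm{A}$, hence at most $\detlb(\mm{A})^k$, while $\det$ of a $\{0,1\}$ matrix of size $k$ is at most $k^{k/2}$ — but we want to replace the $k^{k/2}$ by something like $(eN)^{k/2}$, which is where the power-set structure and Matou\v{s}ek's Lemma 4 (a sharper bound on minors of matrices with few distinct columns, or with a Vandermonde/$\pm1$ structure) must come in. Taking $k$-th roots then yields $\detlb(\mm{P}_N\otimes\mm{A}) \le (\text{poly factor})^{1/k}\cdot \sqrt{?}\cdot \detlb(\mm{A})$, and one checks the dominant term is $\sqrt{eN}$.

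The main obstacle, and the step I would spend the most care on, is getting the bound on minors of the "power-set part" down to $\sqrt{eN}$ per dimension rather than the trivial $\sqrt{k}$ (which could be as large as $\sqrt{N\cdot p}$ and would give the wrong answer when $k \gg N$). The point is that although $\mm{B}$ is $k\times k$, all the combinatorial freedom in its rows comes from choosing subsets of a universe of size at most $r \le N$, so the relevant minors factor through an $r$-dimensional object; this is exactly what Matou\v{s}ek's Lemma 4 should formalize (bounding $\detlb$ of a product $\mm{P}_N \otimes \mm{A}$, or of any matrix whose columns come from $N$ "types", by $\sqrt{N}$ times the per-type bound). So the concrete plan is: (i) reduce to a single square submatrix $\mm{B}$ and introduce the block factorization $\mm{B} = \mm{U}\mm{V}$ through the $r\le N$ used coordinates of $[N]$; (ii) apply Cauchy–Binet to write $\det \mm{B}$ as a sum over $k$-subsets of the $rk$ intermediate indices of products of a $\{0,1\}$-minor and a block-diagonal minor of copies of $\mm{A}$; (iii) bound the block-diagonal minors by $\detlb(\mm{A})^k$ and invoke Matou\v{s}ek's Lemma 4 to bound the combinatorial sum and the $\{0,1\}$-minors, absorbing the binomial coefficient, by $(eN)^{k/2}$; (iv) take $k$-th roots. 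A sanity check I would run: when $\mm{A} = [1]$ (so $\mm{P}_N \otimes \mm{A} = \mm{P}_N$), the bound must give $\detlb(\mm{P}_N) \lesssim \sqrt{N}$, which is consistent with the well-known fact that the largest determinant of a $k\times k$ $\{0,1\}$ matrix whose rows are incidence vectors over an $N$-element universe is $2^{-(k-1)}\prod$-type and behaves like $N^{k/2}$ up to lower-order terms.
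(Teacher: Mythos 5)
Your proposal correctly identifies the two key ingredients (the power-set structure factoring through the $N$ coordinates, and Matou\v{s}ek's Lemma~4), but the route you describe has real gaps and is not the route the paper takes.

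First, the factorization you are reaching for does not exist in the form you describe. The Kronecker structure gives you, on a $k\times k$ submatrix $\mm{B}$ with rows indexed $(S_t, i_t)$ and columns $(j_{t'},\ell_{t'})$, the entry $B_{t,t'} = \mathbb{1}[j_{t'}\in S_t]\cdot A_{i_t,\ell_{t'}}$. This is a \emph{Hadamard} (entrywise) product of a $\{0,1\}$ incidence part and an $\mm{A}$-minor, not a matrix product: the $\mm{A}$ entry is coupled to the \emph{row} label $i_t$ and the \emph{column} label $\ell_{t'}$ simultaneously, so you cannot split $\mm{B}$ as $\mm{U}\mm{V}$ with a $\{0,1\}$ left factor carrying the subset structure and a block-diagonal right factor carrying the $\mm{A}$ entries. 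There is no useful general bound on $|\det(\mm{C}\circ\mm{D})|$ for arbitrary matrices, so this dead-ends. Second, even if you had a factorization through an intermediate index set of size $rk$, the bound $|\det\mm{B}| \le \binom{rk}{k}\max|\det\mm{U}_{\cdot}|\max|\det\mm{V}_{\cdot}|$ is too lossy: $\binom{rk}{k}^{1/k}\approx er$ can be as large as $eN$, and you would have to appeal to a Cauchy--Schwarz refinement (summing squares rather than taking a max), which you don't set up. Third, you misstate Matou\v{s}ek's Lemma~4. It is a row-merging bound for $\detlb$: if every row of $\mm{B}$ is a copy of a row of one of $t$ matrices $\mm{B}_1,\dots,\mm{B}_t$, each with $\detlb(\mm{B}_i)\le D$, then $\detlb(\mm{B})\le D\sqrt{et}$. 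It is not a statement about $\pm1$ combinations or Vandermonde structure.

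The paper's argument sidesteps all of this by never looking at a single arbitrary $k\times k$ submatrix directly. Instead it partitions the \emph{columns} of $\mm{P}_N\otimes\mm{A}$ into $N$ blocks, one block $\mm{u}_\ell\otimes\mm{A}$ per column $\mm{u}_\ell$ of $\mm{P}_N$. Each block $\mm{u}_\ell\otimes\mm{A}$ is just vertically stacked copies of $\mm{A}$ and $\mm{0}$, so any square submatrix of it either has a zero row, a repeated row, or is (up to row permutation) a submatrix of $\mm{A}$; hence $\detlb(\mm{u}_\ell\otimes\mm{A})\le\detlb(\mm{A})$. Then Matou\v{s}ek's Lemma~4 is applied to the transpose, so that this column-partition becomes a row-partition into $t=N$ groups, directly giving $\detlb(\mm{P}_N\otimes\mm{A})\le\sqrt{eN}\cdot\detlb(\mm{A})$. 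So the paper uses Lemma~4 exactly once, as a black box, on the full matrix, and never needs Cauchy--Binet or a submatrix factorization. You should revisit the statement of Lemma~4 and notice that it already does the work you were trying to redo.
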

\begin{proof}
    Let $\mm{u}_1, ..., \mm{u}_N$ be the columns of $\mm{P}_N$ and $\mm{A} \in \RR^{p\times p}$. Divide the columns of $\mm{P}_N \otimes \mm{A}$ into $N$ contiguous blocks of size $(2^{N}p)\times p$ each representing $\mm{u}_\ell \otimes \mm{A}$. Note that $\mm{u}_\ell \otimes \mm{A}$ consists of $2^N$ blocks of $\mm{A}$ or $\mm{0}$ stacked on top of one another. We claim that $\detlb(\mm{u}_\ell \otimes \mm{A}) \leq \detlb(\mm{A})$. Consider an $s\times s$ sub-matrix $\mm{B}$ of $\mm{u}_\ell \otimes \mm{A}$ with the rows indexed by $I$ and columns indexed by $J$. Note that if any row of $\mm{B}$ is zero, then $\det(\mm{B}) = 0$ so in order for the determinant to be non-zero, the rows of $\mm{B}$ must be parts of rows of $\mm{A}$ with columns indexed by $J$. If there are multiple copies of the same row of $\mm{A}$, then again $\det(\mm{B}) = 0$. Thus $\mm{B}$ must come from distinct rows of $\mm{A}$ with columns indexed by $J$. It follows that $\mm{B}$ is actually a sub-matrix of $\mm{A}$ up to rearrangement of the rows, so $\left|\det(\mm{B})\right|^{1/s} \leq \detlb(\mm{A})$. Since this is true for all choices of the submatrix $\mm{B}$, we have $\detlb\left(\mm{u}_\ell \otimes \mm{A}\right) \leq \detlb(\mm{A})$.

    Recall from \cite{matouvsek2013determinant} Lemma 4, that for real matrices $\mm{B}_1, ..., \mm{B}_t$ each with the same number of columns and $D\coloneqq \max_{i=1,2,...t}\detlb(\mm{B}_i)$, any matrix $\mm{B}$ whose rows are copies of the rows of the matrices $\mm{B}_i$ satisfies $\detlb(\mm{B}) \leq D\sqrt{e t}$. By applying Lemma 4 to $\left(\mm{P}_N \otimes \mm{A}\right)^{\top}$ with $\mm{B}_i = \left(\mm{u}_i \otimes \mm{A}\right)^{\top}$, and we have that
    \begin{equation*}
      \detlb(\mm{P}_N \otimes \mm{A}) \leq \sqrt{e
      N}\cdot\max_{\ell \in [N]}\detlb(\mm{u}_\ell \otimes \mm{A})
      \leq \sqrt{e N}\cdot\detlb(\mm{A}).
    \end{equation*}\qed
  \end{proof}

\begin{lemma}{\textup{(Discrepancy Amplification)}.}\label{lem:disc-amplification}
For the power matrix $\mm{P}_N$ and any real matrix $\mm{A}$, 
\[\disc(\mm{P}_N \otimes \mm{A}) \geq \frac{N\cdot \disc_1(\mm{A})}{2}.\]
\end{lemma}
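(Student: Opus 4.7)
The plan is to reduce the $L_\infty$ discrepancy of $\mm{P}_N \otimes \mm{A}$ to a scaled $L_1$ discrepancy of $\mm{A}$ by exploiting the fact that the rows of $\mm{P}_N$ enumerate every subset of $[N]$. Write an arbitrary sign vector $\mm{x} \in \{\pm 1\}^{Np}$ as $N$ consecutive blocks $\mm{x}_1, \ldots, \mm{x}_N \in \{\pm 1\}^p$, and index the rows of $\mm{P}_N \otimes \mm{A}$ by pairs $(S, i)$, where $S \subseteq [N]$ selects a row of $\mm{P}_N$ and $i \in [p]$ selects a row of $\mm{A}$. The mixed-product property of the Kronecker product then gives
\[
\bigl((\mm{P}_N \otimes \mm{A})\mm{x}\bigr)_{(S,i)} \;=\; \sum_{j \in S} (\mm{A}\mm{x}_j)_i,
\]
so $\disc(\mm{P}_N \otimes \mm{A})$ equals the minimum over $\mm{x}_1, \ldots, \mm{x}_N \in \{\pm 1\}^p$ of $\max_{S, i} \bigl| \sum_{j \in S} (\mm{A}\mm{x}_j)_i \bigr|$.

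Next, I fix a row index $i \in [p]$ and apply the elementary observation that for any reals $a_1, \ldots, a_N$, taking $S$ to be either the positive support $\{j : a_j > 0\}$ or the negative support $\{j : a_j < 0\}$ yields
\[
\max_{S \subseteq [N]} \Bigl| \sum_{j \in S} a_j \Bigr| \;\geq\; \tfrac{1}{2}\sum_{j=1}^N |a_j|.
\]
Specializing to $a_j = (\mm{A}\mm{x}_j)_i$ bounds the objective from below, for each fixed $i$, by $\tfrac{1}{2}\sum_j |(\mm{A}\mm{x}_j)_i|$. Since the maximum over $i$ is at least the average, the full objective is at least $\tfrac{1}{2p} \sum_{j=1}^N \sum_{i=1}^p |(\mm{A}\mm{x}_j)_i| = \tfrac{1}{2p}\sum_{j=1}^N \|\mm{A}\mm{x}_j\|_1$.

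Finally, the definition of $\disc_1$ gives $\|\mm{A}\mm{x}_j\|_1 \geq p \cdot \disc_1(\mm{A})$ for every $\mm{x}_j \in \{\pm 1\}^p$, and summing over the $N$ blocks produces the required lower bound $N \cdot \disc_1(\mm{A}) / 2$. There is no substantive obstacle here; the essential content is the subset-sum inequality in the second paragraph, which is exactly what converts an $L_\infty$ maximum over the $2^N$ rows of $\mm{P}_N$ into an $L_1$ sum over the $N$ blocks and allows the $L_1$ discrepancy to be invoked.
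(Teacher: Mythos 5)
Your proof is correct and is essentially the paper's argument: both rely on (i) the subset-sum observation that choosing $S$ to be the positive or the negative support yields $\max_{S}\left|\sum_{j\in S}a_j\right|\geq \tfrac12\sum_j|a_j|$, and (ii) an averaging step over the row index $i\in[p]$ together with $\|\mm{A}\mm{x}_j\|_1\geq p\,\disc_1(\mm{A})$. The only difference is cosmetic: you apply the subset-sum bound rowwise first and then average over $i$, whereas the paper averages first to locate a single good row index $j$ and then applies the subset-sum argument to that row.
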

\begin{proof}
Let $\mm{A} \in \RR^{p\times q}$ and $t \coloneqq \disc_1(\mm{A})$. Consider some vector $\mm{x}\in \{\pm 1\}^{qN}$ composed of vectors $\mm{x}^{(1)}, ..., \mm{x}^{(N)}$ stacked on top of each other containing $p$ entries each. We compute $\norm{(\mm{P}_N \otimes \mm{A})\mm{x}}_{\infty}$. Note that
\[\norm{(\mm{P}_N \otimes \mm{A})\mm{x}}_{\infty} = \max_{S\subseteq [N]}\left\|\sum_{i\in S}\mm{A}\mm{x}^{(i)}\right\|_{\infty} = \max_{S\subseteq[N]}\max_{j \in [p]}\left|\sum_{i \in S}\left(\mm{A}\mm{x}^{(i)}\right)_j\right|.\]
From the assumption, we have $\frac{1}{p}\norm{\mm{A}\mm{x}^{(i)}}_1 \geq t$ for every $i \in [N]$. Taking an average over all choices of $i$, 
\[t \le \frac{1}{pN}\sum_{i=1}^N\sum_{j=1}^p\left|\left(\mm{A}\mm{x}^{(i)}\right)_j\right| = \frac{1}{pN}\sum_{j=1}^p\sum_{i=1}^N\left|\left(\mm{A}\mm{x}^{(i)}\right)_j\right| \implies \sum_{i=1}^N\left|\left(\mm{A}\mm{x}^{(i)}\right)_j\right| \geq Nt\]
for some $j \in [p]$. With $S^{+} = \{i: (\mm{A}\mm{x}^{(i)})_j > 0\}$ and $S^{-} = \{i: (\mm{A}\mm{x}^{(i)})_j < 0\}$, 
\begin{align*}
    Nt \leq \sum_{i=1}^{N}\left|\left(\mm{A}\mm{x}^{(i)}\right)_j\right| &= \sum_{i\in S^+} \left(\mm{A}\mm{x}^{(i)}\right)_j - \sum_{i \in S^-} \left(\mm{A}\mm{x}^{(i)}\right)_j\\
    &= \left|\sum_{i\in S^+} \left(\mm{A}\mm{x}^{(i)}\right)_j\right| + \left|\sum_{i \in S^-} \left(\mm{A}\mm{x}^{(i)}\right)_j\right|\\
    &\implies \max\left\{\left|\sum_{i\in S^+} \left(\mm{A}\mm{x}^{(i)}\right)_j\right|,  \left|\sum_{i \in S^-} \left(\mm{A}\mm{x}^{(i)}\right)_j\right| \right\} \geq \frac{Nt}{2}.
\end{align*}
\end{proof}

Lemmas~\ref{lem:detlb-amplification}~and~\ref{lem:disc-amplification} together imply that
\[
\frac{\herdisc(\mm{P}_N \otimes \mm{A})}{\detlb(\mm{P}_N \otimes \mm{A})} \geq \frac{\sqrt{N}}{2\sqrt{e}} \cdot \frac{\disc_1(\mm{A})}{\detlb(\mm{A})}.
\]
Note that, if \(\mm{A} \in \RR^{p\times q}\), then \(\mm{P}_N \otimes
\mm{A}\) is an \((2^N p)\times (Nq)\) matrix, so \(\sqrt{N}\) is
roughly \(\sqrt{\log m}\) for small enough \(p\), where \(m \coloneqq
2^N p\) is the number of rows of \(\mm{P}_N \otimes \mm{A}\). To prove
Theorem~\ref{thm:main}, we need to find a matrix \(\mm{A}\) that
exhibits a large gap between \(\disc_1(\mm{A})\) and
\(\detlb(\mm{A})\). In the next section, we show that a matrix whose
columns are the discrete Haar basis vectors has this property.

\section{Discrete Haar Basis}\label{sec:haarbasis}
The $2^k\times 2^k$ discrete Haar basis matrix $\mm{A}_k$ is defined recursively with $\mm{A}_0 = [1]$ and 
\begin{equation}\label{eq:haardef}
\mm{A}_k = \begin{bmatrix}\mm{A}_{k-1} & \mm{I}_{2^{k-1}}\\\mm{A}_{k-1} & -\mm{I}_{2^{k-1}} \end{bmatrix},    
\end{equation}
where $\mm{I}_{2^{k-1}}$ is the $2^{k-1} \times 2^{k-1}$ identity matrix. This matrix arises from the following tree structure. Construct a depth $k$ perfect binary tree, and let $r$ be an additional node. We make the root of the perfect binary tree the left child of $r$, and $r$ becomes the root of our tree. Every non-leaf node represents a column in the matrix while every root-to-leaf path corresponds to a row in the matrix. Whenever the path proceeds down the left child from some node $i$, entry $i$ of the corresponding row will have value $+1$. If instead the path proceeds down the right child of $i$, entry $i$ of the corresponding row will have value $-1$. Thus every row will have $k$ non-zero entries. It is also not hard to show that, for any $\pm 1$ coloring of the columns, there is a row whose nonzero entries are equal to the corresponding column colors, and, therefore, $\disc(\mm{A}_k) = k$. Kunisky~\cite{kunisky2023discrepancy} describes this in detail.

In addition, we define the $\{0,1\}^{2^k\times 2^k}$ matrices $\mm{A}_k^+$ and $\mm{A}_k^-$ to be the indicator matrices of the positive and negatives elements of $\mm{A}_k$ respectively. Here an \emph{indicator matrix} will have one in some entry if and only the corresponding entry of $\mm{A}_k$ is non-zero and positive, in the case of $\mm{A}_k^+$, or negative, in the case of $\mm{A}_k^-$. Note that $\mm{A}_k = \mm{A}_k^{+} - \mm{A}_k^{-}$. Finally define, 
\begin{equation}\label{eq:haar-indicators}
    \mm{A}_k^{\pm}\coloneqq \begin{pmatrix}\mm{A}_k^{+}\\\mm{A}_k^{-}\end{pmatrix}.
\end{equation}
We bound the hereditary discrepancy to determinant lower bound ratio for both $\mm{P}_N \otimes \mm{A}_k$ and $\mm{P}_N \otimes \mm{A}_k^{\pm}$.

\begin{theorem}\label{thm:kunisky-ratio}
    For the power matrix $\mm{P}_N$, the discrete Haar basis $\mm{A}_k$, and the stacked indicator matrix $\mm{A}_k^{\pm}$ as defined in equation~\eqref{eq:haar-indicators},
    \begin{align}
        \frac{\herdisc\left(\mm{P}_{N}\otimes\mm{A}_k\right)}{\detlb\left(\mm{P}_{N}\otimes\mm{A}_k\right)} &\gtrsim \sqrt{N\cdot k}, \label{eq:kunisky-ratio}\\
        \frac{\herdisc\left(\mm{P}_{N}\otimes\mm{A}_k^{\pm}\right)}{\detlb\left(\mm{P}_{N}\otimes\mm{A}_k^{\pm}\right)} &\gtrsim \sqrt{N\cdot k} \label{eq:haar-indicator-ratio}.
    \end{align}
\end{theorem}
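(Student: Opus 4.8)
The plan is to deduce Theorem~\ref{thm:kunisky-ratio} from the amplification Lemmas~\ref{lem:detlb-amplification} and~\ref{lem:disc-amplification} together with two facts about the Haar matrices: $\detlb(\mm{A}_k)\lesssim 1$ and $\detlb(\mm{A}_k^{\pm})\lesssim 1$, and $\disc_1(\mm{A}_k)\gtrsim\sqrt{k}$ and $\disc_1(\mm{A}_k^{\pm})\gtrsim\sqrt{k}$. Indeed, since $\herdisc(\mm{M})\ge\disc(\mm{M})$ for every matrix $\mm{M}$, Lemma~\ref{lem:disc-amplification} gives $\herdisc(\mm{P}_N\otimes\mm{A}_k)\ge\frac{N}{2}\disc_1(\mm{A}_k)$, while Lemma~\ref{lem:detlb-amplification} gives $\detlb(\mm{P}_N\otimes\mm{A}_k)\le\sqrt{eN}\,\detlb(\mm{A}_k)$; dividing and plugging in the two facts yields \eqref{eq:kunisky-ratio}, and \eqref{eq:haar-indicator-ratio} follows in exactly the same way with $\mm{A}_k^{\pm}$ in place of $\mm{A}_k$.

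For the determinant bounds I would use the tree description of $\mm{A}_k$: in $\mm{A}_k^{+}$ the column indexed by an internal node $v$ has support equal to the set of leaves below the left child of $v$, and in $\mm{A}_k^{-}$ equal to the leaves below the right child of $v$. Since the leaf-sets of subtrees of a rooted tree form a laminar family, $\mm{A}_k^{+}$ and $\mm{A}_k^{-}$ are incidence matrices of laminar families, hence totally unimodular, so $\detlb(\mm{A}_k^{+})=\detlb(\mm{A}_k^{-})=1$. The rows of $\mm{A}_k^{\pm}$ are rows of $\mm{A}_k^{+}$ and of $\mm{A}_k^{-}$, so Lemma~4 of~\cite{matouvsek2013determinant} gives $\detlb(\mm{A}_k^{\pm})\le\sqrt{2e}$. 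For $\mm{A}_k=\mm{A}_k^{+}-\mm{A}_k^{-}$ I would expand an arbitrary $s\times s$ minor by multilinearity of the determinant in its columns: $\det(\mm{A}_k[I,J])=\sum_{T\subseteq J}(-1)^{|T|}\det(\mm{M}_T)$, where $\mm{M}_T$ takes the column of $\mm{A}_k^{+}$ for indices outside $T$ and of $\mm{A}_k^{-}$ for indices in $T$. Each $\mm{M}_T$ is an $s\times s$ submatrix of the side-by-side matrix $[\mm{A}_k^{+}\mid\mm{A}_k^{-}]$, whose columns are the leaf-sets of all non-root nodes (plus one empty column) and hence again form a laminar family; so $|\det(\mm{M}_T)|\le 1$ and $|\det(\mm{A}_k[I,J])|\le 2^s$, giving $\detlb(\mm{A}_k)\le 2$.

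The crux is $\disc_1(\mm{A}_k)\gtrsim\sqrt{k}$, and the key point is that the multiset $\{\,|(\mm{A}_k\mm{x})_i|:i\in[2^k]\,\}$ is independent of the coloring $\mm{x}\in\{\pm1\}^{2^k}$. Splitting $\mm{x}=(\mm{y},\mm{z})$ as in \eqref{eq:haardef}, $\mm{A}_k\mm{x}=(\mm{A}_{k-1}\mm{y}+\mm{z},\ \mm{A}_{k-1}\mm{y}-\mm{z})$, so the entries of $\mm{A}_k\mm{x}$, as a multiset, are $\biguplus_i\{a_i+z_i,\ a_i-z_i\}=\biguplus_i\{a_i-1,\ a_i+1\}$ with $a_i=(\mm{A}_{k-1}\mm{y})_i$, the last equality because $z_i=\pm1$; taking absolute values, $\{a_i-1,a_i+1\}$ becomes $\{|a_i|-1,|a_i|+1\}$ when $|a_i|\ge1$ and $\{1,1\}$ when $a_i=0$, in either case a function of $|a_i|$ alone. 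Iterating from the base $\mm{A}_0=[1]$, the empirical distribution of $|(\mm{A}_k\mm{x})_i|$ over a uniform $i$ equals the law of $|S_k|$, where $S_k=1+\xi_1+\dots+\xi_k$ with $\xi_t$ i.i.d.\ uniform in $\{\pm1\}$ (the update $|a|\mapsto\{|a-1|,|a+1|\}$ is one step of $|S_{t-1}|\mapsto|S_t|$). Hence $\disc_1(\mm{A}_k)=2^{-k}\norm{\mm{A}_k\mm{x}}_1=\mathbb{E}\,|S_k|$ for every $\mm{x}$, and from $\mathbb{E}[S_k^2]=k+1$ and $\mathbb{E}[S_k^4]=3k^2+4k+1\le 8k^2$, the Hölder inequality $\mathbb{E}|S_k|\ge(\mathbb{E}S_k^2)^{3/2}/(\mathbb{E}S_k^4)^{1/2}\ge\sqrt{k}/(2\sqrt{2})$ gives the claim. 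For $\mm{A}_k^{\pm}$, the entrywise bound $|(\mm{A}_k^{+}\mm{x})_i|+|(\mm{A}_k^{-}\mm{x})_i|\ge|(\mm{A}_k\mm{x})_i|$ yields $\norm{\mm{A}_k^{\pm}\mm{x}}_1\ge\norm{\mm{A}_k\mm{x}}_1=2^k\,\mathbb{E}|S_k|$, and since $\mm{A}_k^{\pm}$ has $2^{k+1}$ rows, $\disc_1(\mm{A}_k^{\pm})\ge\tfrac12\mathbb{E}|S_k|\gtrsim\sqrt{k}$.

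I expect the distributional identity to be the only real obstacle. A naive second-moment estimate gives only $\norm{\mm{A}_k\mm{x}}_1\ge\norm{\mm{A}_k\mm{x}}_2^2/\norm{\mm{A}_k\mm{x}}_\infty\gtrsim 2^k$, i.e.\ $\disc_1(\mm{A}_k)\gtrsim 1$, since $\norm{\mm{A}_k\mm{x}}_2^2$ is the fixed nonzero count of $\mm{A}_k$ (of order $2^k k$) while $\norm{\mm{A}_k\mm{x}}_\infty$ can be of order $k$; a priori the $\ell_2$-mass of $\mm{A}_k\mm{x}$ could sit on a few large coordinates, and it is precisely the cancellation ``$\{a+z,a-z\}=\{a-1,a+1\}$ regardless of the sign of $z$'' that makes the coordinate profile rigid and rules this out. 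Everything else — the laminar/total-unimodularity observations, the use of Matou\v{s}ek's Lemma~4, the multilinear expansion, and the elementary moment computations for $S_k$ — is routine.
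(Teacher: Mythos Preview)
Your top-level argument is identical to the paper's: combine Lemmas~\ref{lem:detlb-amplification} and~\ref{lem:disc-amplification} with the four facts $\detlb(\mm{A}_k),\detlb(\mm{A}_k^{\pm})\lesssim 1$ and $\disc_1(\mm{A}_k),\disc_1(\mm{A}_k^{\pm})\gtrsim\sqrt{k}$, and divide. The differences lie only in how you establish these auxiliary facts. For the determinant bounds, the paper argues recursively from the block structure~\eqref{eq:haardef} and~\eqref{eq:recursive-def-signed-id} (Lemma~\ref{lem:ub-kunisky-detlb} and Corollary~\ref{cor:haar-decomp}); you instead observe that the columns of $\mm{A}_k^{+}$, $\mm{A}_k^{-}$, and in fact of $[\mm{A}_k^{+}\mid\mm{A}_k^{-}]$ are indicators of a laminar family (the dyadic intervals on the leaves) and hence TUM, and then expand an $s\times s$ minor of $\mm{A}_k=\mm{A}_k^{+}-\mm{A}_k^{-}$ by column multilinearity into $2^s$ minors of $[\mm{A}_k^{+}\mid\mm{A}_k^{-}]$ --- a clean alternative yielding the same bound $\detlb(\mm{A}_k)\le 2$. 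For $\disc_1(\mm{A}_k)$, the paper proves that the entries of $\tilde{\mm{A}}_k\mm{x}$ are a permutation of those of $\tilde{\mm{A}}_k\mathbf{1}$ (Claim~\ref{claim:haar-permute}) and then computes $\disc_1(\mm{A}_k)$ exactly via Lemma~\ref{lem:lb-kunisky-computation} and Stirling; your recursion showing that the multiset $\{|(\mm{A}_k\mm{x})_i|\}_i$ is coloring-independent is essentially the same observation (it is the probabilistic remark the paper makes after Lemma~\ref{lem:lb-kunisky-disc1}, promoted to the main argument), and your H\"older estimate $\EE|S_k|\ge(\EE S_k^2)^{3/2}/(\EE S_k^4)^{1/2}$ sidesteps the combinatorial identity at the cost of the exact constant. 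Both routes are correct.
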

\begin{proof}
First we apply the proof structure described in the previous section to $\mm{A}_k$. In particular, we show that $\detlb(\mm{A}_k) = O(1)$ in Lemma~\ref{lem:ub-kunisky-detlb} and that $\disc_1(\mm{A}_k) \gtrsim \sqrt{k}$ in Lemma~\ref{lem:lb-kunisky-disc1}. Applying Lemma~\ref{lem:detlb-amplification} to the first result and Lemma~\ref{lem:disc-amplification} to the second, we have that $\detlb\left(\mm{P}_{N}\otimes\mm{A}_k\right) \lesssim \sqrt{N}$ and $\disc\left(\mm{P}_{N}\otimes\mm{A}_k\right) \gtrsim N\cdot\sqrt{k}$. It follows that 
\[\frac{\herdisc\left(\mm{P}_{N}\otimes\mm{A}_k\right)}{\detlb\left(\mm{P}_{N}\otimes\mm{A}_k\right)} \geq \frac{\disc\left(\mm{P}_{N}\otimes\mm{A}_k\right)}{\detlb\left(\mm{P}_{N}\otimes\mm{A}_k\right)} \gtrsim \sqrt{N\cdot k}.\]   
The process for $\mm{A}_k^{\pm}$ is similar. To show an upper bound on $\detlb(\mm{A}_k^{\pm})$, use Corollary~\ref{cor:haar-decomp} where $\mm{A}_k^+$ and $\mm{A}_k^-$ are shown to be TUM. Since the determinant of any square submatrix of either matrix is at most one in absolute value, we can apply Lemma 4 of~\cite{matouvsek2013determinant} to $\mm{A}_k^+$ and $\mm{A}_k^-$ to obtain $\detlb(\mm{A}_k^{\pm}) = O(1)$. To obtain the lower bound on $\disc_1(\mm{A}_k^{\pm})$, we will recall that $\disc_1(\mm{A}_k) \gtrsim \sqrt{k}$ from Lemma~\ref{lem:lb-kunisky-disc1}. Note that, for any \(\mm{x} \in \{-1, +1\}^{2^k}\), by the triangle inequality
\[
\frac{1}{2^k}\|\mm{A}_k\mm{x}\|_1 
= \frac{1}{2^k}\|(\mm{A}_k^+ - \mm{A}_k^-) \mm{x}\|_1 
\le 2\left(\frac{1}{2^{k+1}}\|\mm{A}_k^+\mm{x}\|_1 + \frac{1}{2^{k+1}}\|\mm{A}_k^-\mm{x}\|_1\right).
\]
Therefore, $\disc_1(\mm{A}_k^{\pm}) \gtrsim \sqrt{k}$ as well. Apply Lemma~\ref{lem:detlb-amplification} and Lemma~\ref{lem:disc-amplification} to $\detlb(\mm{A}_k^{\pm}) = O(1)$ and $\disc_1(\mm{A}_k^{\pm 1}) \gtrsim \sqrt{k}$ respectively to obtain equation~\eqref{eq:haar-indicator-ratio}.
\end{proof}

\begin{lemma}\label{lem:ub-kunisky-detlb}
    $\detlb(\mm{A}_k) \leq 2$.
\end{lemma}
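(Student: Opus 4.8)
The goal is to bound $\detlb(\mm{A}_k)\le 2$, i.e.\ to show that for every square submatrix $\mm{B}$ of $\mm{A}_k$, say $s\times s$, we have $|\det(\mm{B})|\le 2^s$, equivalently $|\det(\mm{B})|^{1/s}\le 2$. The natural tool is the recursive block structure $\mm{A}_k=\left[\begin{smallmatrix}\mm{A}_{k-1} & \mm{I}\\ \mm{A}_{k-1} & -\mm{I}\end{smallmatrix}\right]$. I would try to prove by induction on $k$ a statement slightly stronger than "$|\det(\mm{B})|\le 2^s$" — something that is stable under the recursion. A clean candidate: every square submatrix $\mm{B}$ of $\mm{A}_k$ satisfies $|\det(\mm{B})|\le 2^{r}$ where $r$ is the number of rows of $\mm{B}$ that are taken from the bottom half (equivalently, one can try $|\det \mm{B}|\le 2^{\min(r,s-r)}$ or just $|\det\mm{B}|\le 2^{s/2}$, which already gives $\detlb\le\sqrt2\le 2$). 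Actually, since $\mm{A}_k$ has entries in $\{-1,0,1\}$ and each row has only $k$ nonzeros, Hadamard-type bounds are far too weak, so the block recursion really is the way in.

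**Carrying out the induction.** Let $\mm{B}$ be an $s\times s$ submatrix of $\mm{A}_k$, with some rows drawn from the top copy of $\mm{A}_{k-1}$ (call this set $T$, with columns restricted appropriately) and the rest from the bottom copy (set $D$). The chosen columns $J\subseteq[2^k]$ split into $J_1\subseteq[2^{k-1}]$ (the $\mm{A}_{k-1}$ block) and $J_2$ (the $\pm\mm{I}$ block). Then $\mm{B}$ has the shape $\left[\begin{smallmatrix}(\mm{A}_{k-1})_{T,J_1} & (\mm{I})_{T,J_2}\\ (\mm{A}_{k-1})_{D,J_1} & -(\mm{I})_{D,J_2}\end{smallmatrix}\right]$ up to row reordering. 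Expand the determinant along the columns in $J_2$: each such column is a standard basis vector (up to sign), so Laplace expansion on those columns picks out, for each, a single row; this deletes $|J_2|$ rows and $|J_2|$ columns and leaves a square submatrix of the "glued" matrix $\left[\begin{smallmatrix}\mm{A}_{k-1}\\ \mm{A}_{k-1}\end{smallmatrix}\right]$ restricted to the remaining rows and to columns $J_1$. The point is that this leftover matrix is a submatrix of a matrix whose rows are all rows of $\mm{A}_{k-1}$ (each appearing at most twice), so its determinant is $0$ unless the surviving rows are distinct rows of $\mm{A}_{k-1}$, in which case it is a genuine $(s-|J_2|)\times(s-|J_2|)$ submatrix of $\mm{A}_{k-1}$. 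By induction $|\det|\le 2^{\,s-|J_2|}$ — wait, that needs care, because we also lose the $\pm 1$ factors from the identity columns, which don't hurt. So $|\det(\mm{B})|\le 2^{\,s-|J_2|}\le 2^s$. But this only gives $\detlb(\mm{A}_k)^{s}\le 2^{s}$ trivially if the base case were $2$; I need the base case to actually be bounded by a constant. Here $\mm{A}_0=[1]$ has $\detlb=1$, and one checks small cases: $\mm{A}_1=\left[\begin{smallmatrix}1&1\\1&-1\end{smallmatrix}\right]$ has $\det=-2$, so $\detlb(\mm{A}_1)\ge \sqrt2$. So the true bound is a constant strictly between $1$ and $2$; the cleanest target is to prove $|\det(\mm{B})|\le 2^{r}$ where $r$ is the number of bottom-half rows used, because the identity-column expansion can only reduce $r$, and when all bottom rows are eliminated (or none are used) we fall into a copy of $\mm{A}_{k-1}$ and induct. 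Since $r\le s$, this yields $|\det(\mm{B})|^{1/s}\le 2$.

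**Finishing.** With the invariant $|\det(\mm{B})|\le 2^{\min(\#\text{top rows},\,\#\text{bottom rows used})}$ — or more simply $|\det(\mm{B})|\le 2^{s/2}$, proved by symmetrizing between the two halves (each Laplace step on an identity column either matches a top row against a bottom row and cancels, or reduces dimension) — we get $|\det(\mm{B})|^{1/s}\le\sqrt2<2$ and hence $\detlb(\mm{A}_k)\le 2$ for all $k$.

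**Main obstacle.** The delicate point is the bookkeeping in the Laplace expansion on the $J_2$ columns: one must argue that after striking out those rows/columns, the remaining rows, viewed inside $\left[\begin{smallmatrix}\mm{A}_{k-1}\\ \mm{A}_{k-1}\end{smallmatrix}\right]$ with columns $J_1$, either are distinct rows of $\mm{A}_{k-1}$ (so the submatrix is honestly a submatrix of $\mm{A}_{k-1}$ and induction applies) or contain a repeated row and the determinant vanishes — and to track how the count of "bottom rows" behaves so the exponent in the induction hypothesis actually decreases by the right amount. A cleaner alternative, which I would also keep in mind as a fallback, is to relate $\mm{A}_k$ directly to a scaled orthogonal (Hadamard-like / wavelet) matrix: the rows of $\mm{A}_k$, after appropriate $2^{j/2}$ rescaling per level $j$ of the tree, are orthogonal, so $|\det(\mm{A}_k)|=\prod_j 2^{\,(\text{level-}j\text{ contribution})}$, giving $|\det(\mm{A}_k)|^{1/2^k}=O(1)$ for the full matrix; extending this to arbitrary square submatrices via Cauchy–Binet / interlacing of singular values is the work, but it sidesteps the combinatorial case analysis. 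I expect the inductive block argument to be the shortest route in the paper's style, with the row-distinctness bookkeeping being the one place that needs genuine care.
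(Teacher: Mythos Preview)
Your overall strategy --- induct on $k$ via the block decomposition, expand along the columns coming from the $\pm\mm{I}$ block, and reduce to a submatrix of $\mm{A}_{k-1}$ --- is exactly the paper's approach. The gap is in how you handle a column $c\in J_2$ that hits \emph{both} the top copy and the bottom copy of the matching row. Such a column is \emph{not} a standard basis vector: it has a $+1$ in top row $j$ and a $-1$ in bottom row $j$. Your first pass assumes a single nonzero and therefore misses a factor; your attempted fix (``matches a top row against a bottom row and cancels'') goes the wrong way. The correct observation is that, after deleting column $c$, top row $j$ and bottom row $j$ become \emph{identical} (the only nonzero either row had in the $\pm\mm{I}$ block was in column $c$), so the two cofactors in the Laplace expansion coincide and one gets $|\det(\mm{B})|=2\,|\det(\mm{B}')|$ for an $(s-1)\times(s-1)$ minor $\mm{B}'$. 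Iterating over the $|J_2|$ columns and then invoking the inductive hypothesis on the resulting submatrix of $\mm{A}_{k-1}$ (or noting it has a repeated row and hence determinant $0$) yields $|\det(\mm{B})|\le 2^{|J_2|}\cdot 2^{\,s-|J_2|}=2^{s}$, which is exactly the paper's bound.

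Your sharper proposed invariants $|\det(\mm{B})|\le 2^{s/2}$ and $|\det(\mm{B})|\le 2^{\min(\#\text{top},\#\text{bottom})}$ are in fact false: already $|\det(\mm{A}_2)|=2^{2^2-1}=8$, while $2^{s/2}=2^{2}=4$ (here $s=4$ and the top/bottom split is $2$--$2$). So the correct target is $|\det(\mm{B})|\le 2^{s}$, giving $\detlb(\mm{A}_k)\le 2$, not $\sqrt{2}$. The ``row-distinctness bookkeeping'' you flag as the main obstacle is handled by the same observation: once all $J_2$ columns are stripped, any surviving top/bottom pair with the same index gives two equal rows of $\mm{A}_{k-1}$ and forces the determinant to vanish.
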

\begin{proof}
We show that any $i \times i$ square submatrix $\mm{B}$ of $\mm{A}_k$ satisfies $|\det(\mm{B})| \leq 2^{i}$. First, define $M_{k}(i) \coloneqq \max_{\mm{B}}|\det(\mm{B})|$ where the maximum is taken over all $i\times i$ submatrices $\mm{B}$ of $\mm{A}_k$. We compute $M_{k}(i)$ recursively by considering the forms that all $i\times i$ submatrices of $\mm{A}_k$ can take:
\begin{enumerate}
    \item $\mm{B}$ only contain elements from the first $2^{k-1}$ columns of $\mm{A}_k$,
    \item $\mm{B}$ only contain elements from the second $2^{k-1}$ columns of $\mm{A}_k$, or
    \item $\mm{B}$ contain elements from both the first and second $2^{k-1}$ columns of $\mm{A}_k$.
\end{enumerate}
We use the recursive formula \eqref{eq:haardef} to analyze these cases.
In the first case the resulting submatrix is either entirely contained in $\mm{A}_{k-1}$ up to rearranging rows, or contains a duplicated row. The magnitude of the determinant of these submatrices can be bounded above by $M_{k-1}(i)$ and $0$, respectively. In the second case we note that the submatrix is TUM. To see this, recall that the second $2^{k-1}$ columns of $\mm{A}_k$ consist of an identity matrix and its negation stacked on top of one another. Any square submatrix is entirely contained in the identity matrix or contains duplicated (and negated) rows. Thus the absolute value of the determinant of this kind of submatrix is at most one. It remains to consider the third case. Let $\mm{B}$ be a submatrix of $\mm{A}_k$ with some $j$ columns coming from the second $2^{k-1}$ columns of $\mm{A}_k$ for $1 \leq j < i$. For any such column there is either one or two non-zero entries, equal to $1$ or $-1$. If there is only one non-zero entry, then this reduces to computing $M_k(i-1)$ since we can perform a co-factor expansion on this column. If there are two non-zero entries, then we can permute the rows so that they are adjacent. This only changes the sign of the resulting determinant. Notice that the two rows are identical except for the sign of the non-zero entries. When performing a co-factor expansion on these two entries, the $(i-1)\times(i-1)$ submatrix that results when removing either row and the column is identical. Thus $|\det(\mm{B})|$ is at most twice the absolute value of the determinant of this $(i-1)\times(i-1)$ submatrix. After removing all the columns and associated rows of $\mm{B}$ from the second half of $\mm{A}_k$ in this way, we see that $|\det(\mm{B})| \leq 2^{j}M_{k-1}(i-j)$. Since, in the base case, $M_0(1) = 1$, the claim follows.
\end{proof}
Using a similar argument as above, we can show that the matrices $\mm{A}_k^+$ and $\mm{A}_k^-$ are TUM. Note that, using \eqref{eq:haardef}, $\mm{A}_k^+$ and $\mm{A}_k^-$ can be recursively defined as $\mm{A}_0^+ = [1]$, $\mm{A}_0^- = [0]$, and
\begin{equation}\label{eq:recursive-def-signed-id}
    \mm{A}_k^+ = \begin{bmatrix}
        \mm{A}_{k-1}^+ & \mm{I}_{2^{k-1}}\\
        \mm{A}_{k-1}^+ & \mm{0}
    \end{bmatrix}
    \qquad
    \mm{A}_k^- = \begin{bmatrix}
        \mm{A}_{k-1}^- & \mm{0}\\
        \mm{A}_{k-1}^- & \mm{I}_{2^{k-1}}
    \end{bmatrix},
\end{equation}
where $\mm{0}$ is the all zeros matrix of appropriate dimension, and $\mm{I}_{2^{k-1}}$ is the $2^{k-1}$ by $2^{k-1}$ identity matrix.

\begin{corollary}\label{cor:haar-decomp}
    $\mm{A}_k^+$ and $\mm{A}_k^-$ are TUM matrices where $\mm{A}_k^+$ and $\mm{A}_k^-$ are indicators of the positive and negatives entries of $\mm{A}_k$, respectively.
\end{corollary}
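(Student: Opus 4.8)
The plan is to prove by induction on $k$ that every square submatrix of $\mm{A}_k^+$ has determinant in $\{-1,0,1\}$, and symmetrically for $\mm{A}_k^-$. This is essentially the argument behind Lemma~\ref{lem:ub-kunisky-detlb}, but it should come out cleaner: in that proof the factor of $2$ arose because a column from the ``second half'' of $\mm{A}_k$ could have two nonzero entries (a $+1$ from $\mm{I}_{2^{k-1}}$ and a $-1$ from $-\mm{I}_{2^{k-1}}$), whereas by \eqref{eq:recursive-def-signed-id} the second-half columns of $\mm{A}_k^+$ and of $\mm{A}_k^-$ each contain a single nonzero entry, so cofactor expansion along them introduces no multiplicative factor. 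The base case $k=0$ is immediate since $\mm{A}_0^+=[1]$ and $\mm{A}_0^-=[0]$.

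For the inductive step I would take a square submatrix $\mm{B}$ of $\mm{A}_k^+$ and split into the same three cases as in Lemma~\ref{lem:ub-kunisky-detlb}, using the block form $\mm{A}_k^+ = \begin{bmatrix}\mm{A}_{k-1}^+ & \mm{I}_{2^{k-1}}\\ \mm{A}_{k-1}^+ & \mm{0}\end{bmatrix}$. If the columns of $\mm{B}$ all lie in the first $2^{k-1}$ columns, then $\mm{B}$ is a submatrix of $\begin{bmatrix}\mm{A}_{k-1}^+\\ \mm{A}_{k-1}^+\end{bmatrix}$, hence either a submatrix of $\mm{A}_{k-1}^+$ up to a row permutation (so $\det(\mm{B})\in\{-1,0,1\}$ by induction) or it repeats a row of $\mm{A}_{k-1}^+$ (so $\det(\mm{B})=0$). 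If the columns of $\mm{B}$ all lie in the last $2^{k-1}$ columns, those form $\begin{bmatrix}\mm{I}_{2^{k-1}}\\ \mm{0}\end{bmatrix}$, so $\mm{B}$ either has an all-zero row (determinant $0$) or is a square submatrix of $\mm{I}_{2^{k-1}}$ (determinant in $\{0,1\}$, being $1$ exactly when the chosen row and column index sets coincide). In the mixed case, each second-half column of $\mm{A}_k^+$ equals $\begin{bmatrix}\mm{e}_i\\ \mm{0}\end{bmatrix}$ for a standard basis vector $\mm{e}_i$, so within the rows of $\mm{B}$ it has at most one nonzero entry, equal to $1$; expanding $\det(\mm{B})$ along such a column yields $0$ or $\pm\det(\mm{B}')$ for an $(i-1)\times(i-1)$ submatrix $\mm{B}'$ of $\mm{A}_k^+$ with one fewer second-half column. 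Iterating, $|\det(\mm{B})|=|\det(\mm{B}'')|$ for some square submatrix $\mm{B}''$ using only first-half columns, which is the first case. The argument for $\mm{A}_k^-$ is word-for-word the same, now reading $\begin{bmatrix}\mm{0}\\ \mm{I}_{2^{k-1}}\end{bmatrix}$ for the second-half columns and two stacked copies of $\mm{A}_{k-1}^-$ for the first-half columns.

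I do not expect a genuine obstacle here; the only care needed is routine bookkeeping of the signs produced by row permutations and by cofactor expansions, none of which affect $|\det(\mm{B})|$. As an alternative to the inductive computation, one could instead observe that $\mm{A}_k^+$ and $\mm{A}_k^-$ are network matrices of a rooted tree — rows indexed by root-to-leaf paths, columns by internal nodes, with an entry recording whether a path passes through a node via its left (resp.\ right) child — and invoke the classical fact that network matrices are totally unimodular~\cite{schrijver1998theory}; but the self-contained recursive argument above fits directly with what has already been set up for Lemma~\ref{lem:ub-kunisky-detlb}.
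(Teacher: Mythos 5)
Your proof is correct and follows the same approach as the paper: induction on $k$ using the block form \eqref{eq:recursive-def-signed-id}, splitting into the same three cases, and performing cofactor expansion along second-half columns, each of which has a single nonzero entry equal to $1$. The alternative you mention (viewing $\mm{A}_k^+$ and $\mm{A}_k^-$ as network matrices of a rooted tree) is a nice observation not made in the paper, but since you take the recursive route as your main argument, the two proofs are essentially identical.
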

\begin{proof}
    We only consider $\mm{A}_k^+$ as the proof that $\mm{A}_k^-$ is a TUM matrix is similar. The proof proceeds by induction on $k$. Consider some $i \times i$ submatrix $\mm{B}$ of $\mm{A}_k^+$. If $\mm{B}$ is entirely contained in first half of the columns of $\mm{A}_k^+$, then we are done by the inductive hypothesis; if $\mm{B}$ is entirely contained in the second half of the columns of $\mm{A}_k^+$, then $\mm{B}$ is a submatrix of the identity matrix or has a row of $0$'s, and the absolute value of its determinant is at most $1$. Thus it suffices to consider the case where $\mm{B}$ has some columns from the first half of $\mm{A}_k^+$ and some columns from the second half of $\mm{A}_k^+$. Since any column from the second half has only one non-zero entry, equal to $1$, performing  co-factor expansions on the columns in the second half shows that the absolute value of the determinant will only be as large as the absolute value of the determinant of some smaller square sub-matrix in $\mm{A}_{k-1}^+$. Note that in the base case, $|\det(\mm{A}_0^+)| = 1$.
\end{proof}

\begin{lemma}\label{lem:lb-kunisky-disc1}
$\disc_1(\mm{A}_k) = \frac{k+1}{2^{k}}\binom{k}{\floor{(k+1)/2}} \cong \sqrt{k}$.
\end{lemma}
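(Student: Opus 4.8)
My plan is to prove the stronger fact that $\norm{\mm{A}_k\mm{x}}_1$ does \emph{not depend on the coloring} $\mm{x}\in\{\pm 1\}^{2^k}$ at all, so that the ``$\min$ over all $\mm{x}$'' in the definition of $\disc_1$ disappears, and then to evaluate the resulting common value. The heart of the argument is the following claim, proved by induction on $k$: for every $\mm{x}\in\{\pm 1\}^{2^k}$, the multiset of entries of $\mm{A}_k\mm{x}$ equals
\[
\Bigl\{\, x_1+\textstyle\sum_{j=1}^{k}\epsilon_j \;:\; \epsilon\in\{\pm 1\}^{k}\,\Bigr\},
\]
where $x_1$ is the first coordinate of $\mm{x}$. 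The base case $k=0$ is immediate. For the step, write $\mm{x}=(\mm{x}',\mm{x}'')$ with $\mm{x}',\mm{x}''\in\{\pm 1\}^{2^{k-1}}$; by~\eqref{eq:haardef} the entries of $\mm{A}_k\mm{x}$ are the numbers $(\mm{A}_{k-1}\mm{x}')_i+x''_i$ and $(\mm{A}_{k-1}\mm{x}')_i-x''_i$ over $i\in[2^{k-1}]$, and since $x''_i\in\{\pm 1\}$ the unordered pair $\{(\mm{A}_{k-1}\mm{x}')_i+x''_i,\ (\mm{A}_{k-1}\mm{x}')_i-x''_i\}$ is just $\{(\mm{A}_{k-1}\mm{x}')_i+t : t\in\{\pm 1\}\}$. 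Applying the inductive hypothesis to $\mm{A}_{k-1}\mm{x}'$ (whose first coordinate is again $x_1$) and taking the union over $i$ and $t$ collapses everything to the multiset above.

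Since the multiset $\{\sum_j\epsilon_j : \epsilon\in\{\pm 1\}^k\}$ is symmetric about $0$ and $x_1\in\{\pm 1\}$, the claim gives $\norm{\mm{A}_k\mm{x}}_1=\sum_{\epsilon\in\{\pm 1\}^k}\bigl|1+\sum_{j=1}^k\epsilon_j\bigr|$, and appending one more $\pm 1$ sign (again using this symmetry) yields $\norm{\mm{A}_k\mm{x}}_1=\tfrac12\sum_{\epsilon\in\{\pm 1\}^{k+1}}\bigl|\sum_{j=1}^{k+1}\epsilon_j\bigr|$. Dividing by $m=2^k$, we get that $\disc_1(\mm{A}_k)=2^{-(k+1)}\sum_{\epsilon\in\{\pm 1\}^{k+1}}\bigl|\sum_{j=1}^{k+1}\epsilon_j\bigr|$ for \emph{every} coloring; equivalently, $\disc_1(\mm{A}_k)$ is the expected absolute value of a sum of $k+1$ independent uniform $\pm 1$ random variables, which I will now evaluate in closed form.

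Set $n=k+1$ and group by the number $j$ of $+1$'s among the $n$ signs, so $2^{-n}\sum_{\epsilon\in\{\pm 1\}^n}\bigl|\sum_j\epsilon_j\bigr|=2^{-n}\sum_{j=0}^{n}\binom{n}{j}\,|2j-n|$. I would then use the telescoping identity $(2j-n)\binom{n}{j}=n\bigl(\binom{n-1}{j-1}-\binom{n-1}{j}\bigr)$ (which is $j\binom{n}{j}=n\binom{n-1}{j-1}$ applied twice): the part of the sum with $2j-n>0$ telescopes to $n\binom{n-1}{\floor{n/2}}$, and the whole sum is twice this by symmetry, giving $\disc_1(\mm{A}_k)=2^{-(n-1)}\,n\binom{n-1}{\floor{n/2}}=\frac{k+1}{2^{k}}\binom{k}{\floor{(k+1)/2}}$. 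Finally $\binom{k}{\floor{(k+1)/2}}=\binom{k}{\floor{k/2}}=\Theta\!\bigl(2^{k}/\sqrt{k}\bigr)$ for $k\ge 1$ by Stirling's approximation, while $(k+1)/\sqrt{k}=\Theta(\sqrt{k})$, so $\disc_1(\mm{A}_k)\cong\sqrt{k}$.

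The only genuinely nontrivial step is the coloring-independence established by the multiset claim; once that is in hand, everything else is a textbook binomial computation. I expect the multiset induction above to be the cleanest route; an equivalent but notationally heavier alternative is to use the tree picture following~\eqref{eq:haardef}, writing the entry of $\mm{A}_k\mm{x}$ indexed by a root-to-leaf path as $x_r+\sum_j\eta_j x_{v_j}$ and checking that the substitution $\eta_j\mapsto\eta_j x_{v_j}$ is a bijection of $\{\pm 1\}^k$, which again shows that the entries of $\mm{A}_k\mm{x}$ form the same multiset for every $\mm{x}$.
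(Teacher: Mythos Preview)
Your proof is correct and follows essentially the same approach as the paper: both establish that the multiset of entries of $\mm{A}_k\mm{x}$ is independent of the coloring via an inductive argument, and then reduce to the same binomial identity $\sum_{\ell}\binom{n}{\ell}|n-2\ell|=2n\binom{n-1}{\lfloor n/2\rfloor}$ (the paper's Lemma~\ref{lem:lb-kunisky-computation}). Your organization is slightly more streamlined---you induct directly on $\mm{A}_k$ using the block recursion~\eqref{eq:haardef} and absorb the first column into the parameter $x_1$, whereas the paper strips off the first column to form $\tilde{\mm{A}}_k$, proves the permutation claim on that matrix via the tree decomposition, and then reintroduces the first column by averaging $\disc_1(\mm{A}_k,\mm{x})$ with $\disc_1(\mm{A}_k,-\mm{x})$; the content is the same.
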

\begin{proof}
The proof for $k = 0$ is trivial, so we focus on the case $k \ge 1$. Let $\tilde{\mm{A}}_k$ denote the $2^k \times (2^k - 1)$ matrix equal to $\mm{A}_k$ with the first column, all of whose entries are $1$, removed. Note that this is equivalent to removing the root node $r$ and keeping only the perfect binary tree of depth $k$ in the tree structure of the Haar basis, as described at the beginning of the section. \modified{We have the following key claim.}

\begin{claim}\label{claim:haar-permute}
    For any $\mm{x} \in \{\pm 1\}^{2^k - 1}$ there exists a permutation which maps the entries of $\tilde{\mm{A}}_k\mm{x}$ to those of $\tilde{\mm{A}}_k\mathbf{1}$. 
\end{claim}
\begin{proof}
Our proof is by induction on $k$. When $k = 1$,we have a root node with two children corresponding to the matrix
\[
\tilde{\mm{A}}_1 = \begin{bmatrix}
1\\
-1
\end{bmatrix}.
\]
 When $\mm{x} = [1]$, $\tilde{\mm{A}}_1\mm{x} = \tilde{\mm{A}}_1\mathbf{1}$ and the identity permutation suffices; when $\mm{x} = [-1]$, $\tilde{\mm{A}}_1\mm{x} = -\tilde{\mm{A}}_1\mathbf{1}$ and it suffices to swap the two entries.

Consider some height $k$ perfect binary tree corresponding to $\tilde{\mm{A}}_k$. Let $u$ be the root of the tree with left and right children $u_+$ and $u_-$ respectively. Since every root-to-leaf path must go through $u_+$ or $u_-$, this forms a partition of the rows of $\tilde{\mm{A}}_k$. In particular, we can rearrange $\tilde{\mm{A}}_k$ as
\[\tilde{\mm{A}}_k = \begin{bmatrix}
    \mathbf{1} & \tilde{\mm{A}}_{k-1} & \mm{0}\\
    -\mathbf{1} & \mm{0} & \tilde{\mm{A}}_{k-1} 
\end{bmatrix}.\]
Consider the $\tilde{\mm{A}}_{k-1}$ submatrices which appear in $\tilde{\mm{A}}_{k}$. The $\tilde{\mm{A}}_{k-1}$ submatrix in the first $2^{k-1}$ rows has rows which correspond to root-to-leaf paths with leaves in the subtree rooted at $u_+$. Its columns correspond to nodes in the same subtree. The $\tilde{\mm{A}}_{k-1}$ submatrix in the second $2^{k-1}$ rows is defined similarly on the subtree rooted at $u_-$. Write the vector $\mm{x}$ as $[x_u, \mm{x}_+, \mm{x}_-]^{\top}$ where $x_u$ is the color of the node $u$ and $\mm{x}_+$ and $\mm{x}_-$ are the colors of the nodes in the subtrees rooted at $u_+$ and $u_-$, respectively. Consider the value of $x_u$. If $x_u = 1$, then by the inductive hypothesis, there exists a permutation which takes the entries of $\tilde{\mm{A}}_{k-1}\mm{x}_+$ to the entries of $\tilde{\mm{A}}_{k-1}\mathbf{1}$ and another permutation which takes the entries of $\tilde{\mm{A}}_{k-1}\mm{x}_-$ to the entries of $\tilde{\mm{A}}_{k-1}\mathbf{1}$. These two permutations can be combined to form a permutation which maps the entries of $\tilde{\mm{A}}_k\mm{x}$ to the entries of $\tilde{\mm{A}}_k\mathbf{1}$. Otherwise $x_u = -1$. Again, there exists a permutation $\pi_1$ which takes $\tilde{\mm{A}}_{k-1}\mm{x}_+$ to $\tilde{\mm{A}}_{k-1}\mathbf{1}$ and another permutation $\pi_2$ which takes $\tilde{\mm{A}}_{k-1}\mm{x}_-$ to $\tilde{\mm{A}}_{k-1}\mathbf{1}$. We can construct a permutation which maps the elements of $\tilde{\mm{A}}_k\mm{x}$ to those of $\tilde{\mm{A}}_k\mathbf{1}$ by by first applying $\pi_1$ to the first $2^{k-1}$ entries of $\tilde{\mm{A}}_k\mm{x}$, and $\pi_2$ to the remaining $2^{k-1}$ entries, and then swapping the first $2^{k-1}$ entries with the second $2^{k-1}$ entries. 
\end{proof}
\modified{
Note that, for any $\mm{x} \in\{\pm 1\}^{2^k}$, \(\disc_1(\mm{A}_k,\mm{x}) = \disc_1(\mm{A}_k,-\mm{x})\). Then, we have
\[
\disc_1(\mm{A}_k,\mm{x}) = \frac12(\disc_1(\mm{A}_k,\mm{x}) + \disc_1(\mm{A}_k,-\mm{x}))
\]
By Claim~\ref{claim:haar-permute}, and, since all entries of the first column of $\mm{A}_k$ are equal to $1$,
\begin{align*}
    \disc_1(\mm{A}_k,\mm{x}) &= \frac{1}{2^{k}} \sum_{i=1}^{2^k}\frac{|\tilde{\mm{a}}_i^\top \mathbf{1}+1| + |\tilde{\mm{a}}_i^\top \mathbf{1}-1|}{2}\\
    &= \frac{1}{2^{k+1}} \sum_{i=1}^{2^k}|\tilde{\mm{a}}_i^\top \mathbf{1}+1| 
    + 
    \frac{1}{2^{k+1}} \sum_{i=1}^{2^k}  |\tilde{\mm{a}}_i^\top \mathbf{1}-1|,
\end{align*}
where $\tilde{\mm{a}}_i^\top$ is the $i$-th row of $\tilde{\mm{A}}_k$. Recall that each row of $\tilde{\mm{A}}_k$ has exactly $k$
non-zero entries, and every sign pattern for these $k$ entries appears exactly once, as each row in $\tilde{\mm{A}}_k$ corresponds to a root-to-leaf path in a depth $k$ perfect binary tree, and the sign pattern corresponds to the sequence of left and right turns made by the path. In particular, there are exactly ${k\choose \ell}$ rows $\tilde{\mm{a}}_i$ for which $\tilde{\mm{a}}_i^\top \mathbf{1}$ equals $k-2\ell$, since such rows have $k-\ell$ non-zero entries equal to $+1$, and $\ell$ non-zero entries equal to $-1$. 
Substituting above, we have
\begin{align*}
   \frac{1}{2^{k+1}} \sum_{i=1}^{2^k}|\tilde{\mm{a}}_i^\top \mathbf{1}+1| 
    + 
    &\frac{1}{2^{k+1}} \sum_{i=1}^{2^k} |\tilde{\mm{a}}_i^\top \mathbf{1}-1|\\
    &=
     \frac{1}{2^{k+1}} \sum_{\ell=0}^k {k\choose \ell}|k+1-2\ell| 
    + 
    \frac{1}{2^{k+1}} \sum_{\ell=0}^k {k\choose \ell}|k-1-2\ell|\\
    &= 
     \frac{1}{2^{k+1}} \sum_{\ell=0}^{k} {k\choose \ell}|k+1-2\ell| 
    + 
    \frac{1}{2^{k+1}} \sum_{\ell=1}^{k+1} {k\choose \ell-1}|k+1-2\ell|\\
     &=
    \frac{1}{2^{k+1}} \sum_{\ell=0}^{k+1} \left({k\choose \ell}+{k\choose \ell-1}\right)|k+1-2\ell| \\
    &=
    \frac{1}{2^{k+1}} \sum_{\ell=0}^{k+1} {k+1\choose \ell}|k+1-2\ell|.
\end{align*}
The second equality above follows by a change of variables in the second sum, the third equality uses the convention $\binom{k}{k+1} = \binom{k}{-1} = 0$, and the last equality follows form Pascal's identity.}

\modified{Now, by Lemma~\ref{lem:lb-kunisky-computation} below, we have that \[\sum_{\ell=0}^{k+1} {k+1\choose \ell}|k+1-2\ell| = 2(k+1)\binom{k}{\floor{(k+1)/2}},\] which implies that $\disc_1(\tilde{\mm{A}}_k) \cong\sqrt{k}$ since $\binom{k}{\floor{(k+1)/2}} \cong 2^{k}/\sqrt{k}$ by Stirling's approximation.}
\end{proof}

The above proof also has a probabilistic interpretation. We show that, for a uniformly random row $\mm{a}^{\top}$ of $\tilde{\mm{A}}_k$ and a fixed coloring $\mm{x} \in \{\pm 1\}^{2^k-1}$, $\mm{a}^{\top}\mm{x}$ is distributed like $X_1 + \cdots + X_k$ where the $X_i$s are independent Rademacher random variables \modified{(i.e. random variables uniform in $\{-1,+1\}$)}. Recall that uniformly choosing a row of $\tilde{\mm{A}}_k$ corresponds to uniformly choosing a root-to-leaf path in the depth $k$ perfect binary tree. Further, the non-leaf nodes of the tree correspond to the columns of $\tilde{\mm{A}}_k$. Thus we know that exactly $k$ entries of the row will be non-zero. \modified{Let the indices of these entries be $U_1, ..., U_k$, and note that $\mm{a}^{\top}\mm{x} = x_{U_1}a_{U_1} + \cdots + x_{U_k}a_{U_k}$. The key observation is that, conditional on the values of $U_1, \ldots, U_{\ell}$, $a_{U_\ell}$ is equally likely to be $-1$ or $+1$, since the a uniformly random path in the binary tree going through $U_1, \ldots, U_\ell$ is equally likely to visit the left or the right child of $U_\ell$. We can then show that $x_{U_1}a_{U_1} + \cdots + x_{U_k}a_{U_k}$ has the same distribution as a sum of $k$ independent Rademacher random variables by induction on $k$. In the base case, $a_{U_1}$ is uniform in $\{-1, +1\}$, and so is $x_{U_1}a_{U_1}$. Suppose $x_{U_1}a_{U_1} + \cdots + x_{U_{k-1}}a_{U_{k-1}}$ is distributed as the sum of $k-1$ independent Rademacher random variables. Conditional on the choice of $U_1, \ldots, U_k$, $a_{U_k}$, and, therefore, $x_{U_k}a_{U_k}$ are equally likely to be $-1$ or $+1$. Taking expectation over the choice of $U_1, \ldots, U_k$ finishes the proof.}

The next lemma is likely a well-known calculation. We include a proof due to Lavrov, for completeness.
\begin{lemma}[\cite{lavrov2018}]\label{lem:lb-kunisky-computation}
    $\sum_{\ell=0}^{k}\binom{k}{\ell}|k-2\ell| = 2k\cdot\binom{k-1}{\floor{k/2}}$.
\end{lemma}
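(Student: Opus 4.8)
The plan is to evaluate $S_k \coloneqq \sum_{\ell=0}^{k}\binom{k}{\ell}|k-2\ell|$ by exploiting the symmetry of the binomial coefficients and collapsing the absolute value. Writing $j = k - \ell$, the term $|k-2\ell| = |2j - k|$, and since $\binom{k}{\ell} = \binom{k}{k-\ell}$ the summand is symmetric under $\ell \mapsto k-\ell$; hence the contributions from $\ell < k/2$ and $\ell > k/2$ are equal, and (when $k$ is even) the middle term $\ell = k/2$ contributes $0$. So $S_k = 2\sum_{\ell \,:\, 2\ell < k}\binom{k}{\ell}(k-2\ell)$. The key simplification is the identity $(k-2\ell)\binom{k}{\ell} = k\left(\binom{k-1}{\ell} - \binom{k-1}{\ell-1}\right)$, which one checks directly from $\ell\binom{k}{\ell} = k\binom{k-1}{\ell-1}$ and $(k-\ell)\binom{k}{\ell} = k\binom{k-1}{\ell}$. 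Substituting this in turns the sum into a telescoping sum.

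Concretely, $S_k = 2k\sum_{\ell=0}^{\lceil k/2\rceil - 1}\left(\binom{k-1}{\ell} - \binom{k-1}{\ell-1}\right)$, which telescopes to $2k\binom{k-1}{\lceil k/2 \rceil - 1}$. It then remains only to check that $\lceil k/2\rceil - 1 = \lfloor k/2 \rfloor$ when... wait — one must be careful here: for $k$ even, $\lceil k/2\rceil - 1 = k/2 - 1 = \lfloor k/2\rfloor - 1 \ne \lfloor k/2\rfloor$, but $\binom{k-1}{k/2-1} = \binom{k-1}{k/2}$ by symmetry of $\binom{k-1}{\cdot}$ (since $(k/2-1)+(k/2) = k-1$), and $\lfloor k/2\rfloor = k/2$, so $\binom{k-1}{\lfloor k/2\rfloor} = \binom{k-1}{k/2} = \binom{k-1}{k/2 - 1}$; for $k$ odd, $\lceil k/2\rceil - 1 = (k-1)/2 = \lfloor k/2\rfloor$ directly. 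Either way $\binom{k-1}{\lceil k/2\rceil - 1} = \binom{k-1}{\lfloor k/2\rfloor}$, giving $S_k = 2k\binom{k-1}{\lfloor k/2\rfloor}$ as claimed. One should also handle the upper limit of the telescoped sum cleanly: when $k$ is even the natural cutoff is $\ell \le k/2 - 1$ (the $\ell = k/2$ term is zero and can be included or excluded freely), and when $k$ is odd it is $\ell \le (k-1)/2$; writing the bound as $\lfloor (k-1)/2 \rfloor$ covers both, and the telescope then gives $\binom{k-1}{\lfloor (k-1)/2\rfloor}$, which equals $\binom{k-1}{\lfloor k/2\rfloor}$ by the same symmetry check.

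The only mildly delicate point — and the one I would write out carefully rather than wave at — is the bookkeeping of floors and ceilings across the even/odd cases, together with the boundary terms $\binom{k-1}{-1} = 0$ in the telescoping sum. The algebraic heart of the argument, the identity $(k-2\ell)\binom{k}{\ell} = k\bigl(\binom{k-1}{\ell} - \binom{k-1}{\ell-1}\bigr)$ and the ensuing telescope, is entirely routine. An alternative, essentially equivalent, route is probabilistic: $S_k / 2^k = \EE|Z_k|$ where $Z_k = X_1 + \cdots + X_k$ is a sum of independent Rademacher variables, and the mean absolute deviation of a centered binomial is a classical closed form equal to $2k\binom{k-1}{\lfloor k/2\rfloor}/2^k$ (this is exactly the quantity that appears, via Lemma~\ref{lem:lb-kunisky-disc1} and the Rademacher interpretation discussed after it); but since the paper wants a self-contained combinatorial identity, I would present the telescoping-sum proof.
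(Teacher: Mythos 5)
Your proof is correct, and it rests on the same core ingredient as the paper's: the absorption identity $\ell\binom{k}{\ell} = k\binom{k-1}{\ell-1}$ together with the symmetry $\binom{k}{\ell} = \binom{k}{k-\ell}$. The organization differs in a way that is worth noting. The paper first splits the absolute value by sign, then distributes $k-2\ell$ into separate $k$-terms and $\ell$-terms, applies absorption to the $\ell$-terms, and finishes with a final ``cancellation'' step that it states rather tersely by parity of $k$. You instead fold the sum in half by symmetry up front, and then combine both halves of the absorption identity into the single telescoping relation
\[
(k-2\ell)\binom{k}{\ell} = k\left(\binom{k-1}{\ell} - \binom{k-1}{\ell-1}\right),
\]
which collapses immediately to $2k\binom{k-1}{\lceil k/2\rceil - 1} = 2k\binom{k-1}{\lfloor k/2\rfloor}$. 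That makes the last step of the paper's argument fully explicit rather than implicit, at the small cost of the floor/ceiling bookkeeping you already flag and handle correctly. In substance the two proofs are the same; yours is a cleaner write-up of the cancellation the paper leaves to the reader.
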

\begin{proof}
Recall the identity $\binom{k}{\ell}\ell = \binom{k-1}{\ell-1}k$. We write
\begin{align*}
    \sum_{\ell = 0}^k\binom{k}{\ell}|k-2\ell| &= \sum_{\ell < k/2}\binom{k}{\ell}(k-2\ell) - \sum_{\ell > k/2}\binom{k}{\ell}(k-2\ell)\\
    &= k\left(\sum_{\ell < k/2}\binom{k}{\ell} - \sum_{\ell > k/2}\binom{k}{\ell}\right) - 2\left(\sum_{\ell < k/2}\binom{k}{\ell}\ell - \sum_{\ell > k/2}\binom{k}{\ell}\ell\right)\\
    &= 2k\left(\sum_{\ell > k/2}\binom{k-1}{\ell-1} - \sum_{\ell < k/2}\binom{k-1}{\ell-1}\right)\\
    &= 2k\binom{k-1}{\floor{k/2}}.
\end{align*}
Here, the first equality follows since $\binom{k}{\ell}\left(k - 2(k/2)\right) = 0$ when $k$ is even. The last equality follows by consider the parity of $k$; when $k$ is even, we obtain a $\binom{k-1}{k/2}$ term after cancellation, and when $k$ is odd, we obtain a $\binom{k-1}{\floor{k/2}}$ term after cancellation.
\end{proof}
Note that when we divide the identity by $2^k$, we obtain the expectation of a sum of $k$ independent Rademacher random variables. The asymptotic version of this identity follows from Khintchine's inequality.

\begin{proof}[Proof of Theorem~\ref{thm:main}]
    Consider the range of $m$ in $[n, n^2]$ and $\left(n^2, 2^{n^{(1-\epsilon)}}\right]$ separately. In the first interval, we let $\mm{A}$ be the matrix $\mm{A}_k$ padded with $m - n$ rows of zeros. Here, $\frac{\herdisc(\mm{A})}{\detlb(\mm{A})} \cong \log n \cong \sqrt{\log m\cdot \log n}$. When $m \in \left(n^2, 2^{n^{(1-\epsilon)}}\right]$, we consider the matrix $\mm{P}_N\otimes \mm{A}_k$ where $N = \floor{\log_2(m/n)}$ and $k = \floor{\log_2 n^{\epsilon}}$. Observe that $\mm{P}_{N}\otimes\mm{A}_{k}$ is an $m' \times n'$ matrix where $m' = 2^{N+k} \leq m/n^{1 - \epsilon} < m$ and $n' = N\cdot2^k \leq \log_2(m/n) \cdot n^{\epsilon} \leq n - n^{\epsilon}\log n$ since $m \leq 2^{n^{(1-\epsilon)}}$. We obtain $\mm{A}$ by padding $\mm{P}_N\otimes\mm{A}_k$ with zero vectors so that it has exactly $m$ rows and $n$ columns. Note that $\log m \cong N$ and $\log n \cong k$. By Theorem~\ref{thm:kunisky-ratio}, $\frac{\herdisc(\mm{A})}{\detlb(\mm{A})}  \gtrsim\sqrt{Nk} \gtrsim \sqrt{\log m\log n}$, as required. 

    The reader might object that the matrix $\mm{A}_k$ has negative entries which would not occur for incidence matrices of a set system, but we can remedy this by considering $\mm{A}_k^{\pm}$ as defined in Theorem~\ref{thm:kunisky-ratio} instead.
\end{proof}

\subsection{Other Notable Properties}\label{sec:othernotableproperties}
We can make a few more observations about the properties of $\mm{A}_k$.
\begin{claim}
    $|\det(\mm{A}_{k})| = 2^{2^k - 1}$.
\end{claim}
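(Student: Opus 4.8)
The plan is to turn the recursive definition \eqref{eq:haardef} into a one-step recursion for $|\det(\mm{A}_k)|$ by block row operations, and then solve that recursion. Starting from
\[
\mm{A}_k = \begin{bmatrix}\mm{A}_{k-1} & \mm{I}_{2^{k-1}}\\ \mm{A}_{k-1} & -\mm{I}_{2^{k-1}}\end{bmatrix},
\]
I would subtract the $i$-th row of the top block from the $i$-th row of the bottom block, for every $i \in [2^{k-1}]$. Each such step adds a multiple of one row to another and hence leaves the determinant unchanged, and after performing all $2^{k-1}$ of them the matrix becomes
\[
\begin{bmatrix}\mm{A}_{k-1} & \mm{I}_{2^{k-1}}\\ \mm{0} & -2\,\mm{I}_{2^{k-1}}\end{bmatrix}.
\]
Since this matrix is block triangular (the lower-left block is $\mm{0}$), its determinant is the product of the determinants of the diagonal blocks, i.e. $\det(\mm{A}_{k-1})\cdot(-2)^{2^{k-1}}$. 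Taking absolute values gives the recursion $|\det(\mm{A}_k)| = 2^{2^{k-1}}\cdot|\det(\mm{A}_{k-1})|$.

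With the base case $|\det(\mm{A}_0)| = |\det([1])| = 1$, unrolling the recursion yields
\[
|\det(\mm{A}_k)| = \prod_{j=1}^{k} 2^{2^{j-1}} = 2^{\sum_{j=0}^{k-1} 2^{j}} = 2^{2^k - 1},
\]
which is exactly the claimed value.

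I do not expect a genuine obstacle here: the only points needing (routine) care are that the block row operations are determinant-preserving and that the block-triangular determinant identity applies once the lower-left block vanishes, both of which are standard linear algebra. A quick sanity check at $k = 1$, where $\mm{A}_1 = \begin{bmatrix}1 & 1\\ 1 & -1\end{bmatrix}$ has $|\det| = 2 = 2^{2^1 - 1}$, and at $k = 2$, where the recursion predicts $2\cdot 2^{2} = 8 = 2^{2^2-1}$, confirms the formula. (One could alternatively argue via the Kronecker/tensor structure of the Haar transform, but the block elimination above is the most direct route.)
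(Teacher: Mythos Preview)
Your proof is correct, but it proceeds differently from the paper. The paper observes that the columns of $\mm{A}_k$ are pairwise orthogonal (this is the defining property of the Haar basis), so $|\det(\mm{A}_k)|$ equals the product of the $\ell_2$-norms of the columns; it then reads off those norms from the recursive block structure. Your argument instead performs block row elimination to reduce to a block-triangular matrix and derives the recursion $|\det(\mm{A}_k)| = 2^{2^{k-1}}|\det(\mm{A}_{k-1})|$. Both routes are elementary and short. The paper's approach has the side benefit of highlighting orthogonality, a structural feature of $\mm{A}_k$ that is useful elsewhere; your approach is slightly more self-contained in that it does not require separately checking that the columns are orthogonal.
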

\begin{proof}
    Note that the columns of $\mm{A}_{k}$ are orthogonal so $|\det(\mm{A}_k)|$ is equal to the product of the $\ell_2$-norms of columns. Since the $i$th column has magnitude $2^{2^{i-1}}$, $|\det(\mm{A}_k)| = 2^{2^{k-1} + 2^{k-2} + \cdots + 2^{0}} = 2^{2^k-1}$.
\end{proof}

Next we prove Theorem~\ref{thm:tight-matousek-example}, showing that $\mm{A}_k$ serves as an example that equation~\eqref{eq:matousek-eq2} of~\cite{matouvsek2013determinant} --- mentioned in the introduction --- is tight. 
\begin{proof}[Proof of Theorem~\ref{thm:tight-matousek-example}]
To see this, it suffices to show that $\vecdisc\left(\mm{A}_k\right)^2 = \Omega(k)$. Let $\mm{v}_0, \mm{v}_1, ..., \mm{v}_q$ be the vector colors assigned to the $2^k$ columns of $\mm{A}_k$. Recall that $\mm{A}_k$ corresponds to a tree with root node $r$, where $r$ has no right child, and the left child is the root of a perfect binary tree of depth $k$. The root to leaf paths of this tree represent rows in $\mm{A}_k$. For any path $r, t_1, ..., t_i$ from $r$ to a node $t_i$, let $\overline{\mm{v}}_{t_i} = \mm{v}_{r} + \sum_{j=1}^{i-1} a_{t_j}\mm{v}_{t_j}$ where $a_{t_j}$ is $1$ if $t_{j+1}$ is the left child of $t_{j}$, and $-1$ otherwise. We will show that there exists a root-to-leaf $t_k$ path $t_1, ..., t_k$ such that $\left\|\overline{\mm{v}}_{t_k}\right\|_2^2 \ge k$. In particular we show that at every internal node $t$, with children $t_+$ and $t_-$, must have $\norm{\overline{\mm{v}}_{t_+}}^2 \geq 1 + \norm{\overline{\mm{v}}_t}^2$ or $\norm{\overline{\mm{v}}_{t_-}}^2 \geq 1 + \norm{\overline{\mm{v}}_t}^2$. To see this, note that
\[\norm{\overline{\mm{v}}_{t_+}}^2 = \norm{\overline{\mm{v}}_{t} + \mm{v}_{t}}^2 = \norm{\overline{\mm{v}}_{t}}^2 + \norm{\mm{v}_{t}}^2 + 2\anglebrac{\overline{\mm{v}}_{t}, \mm{v}_{t}} = \norm{\overline{\mm{v}}_{t}}^2 + 1 + 2\anglebrac{\overline{\mm{v}}_{t}, \mm{v}_{t}}.\]
Similarly, we have that $\norm{\overline{\mm{v}}_{t_-}}^2 = \norm{\overline{\mm{v}}_{t}}^2 + 1 - 2\anglebrac{\overline{\mm{v}}_{t}, \mm{v}_{t}}$. The claim then follows since either $\anglebrac{\overline{\mm{v}}_{t}, \mm{v}_{t}} \geq 0$ or $-\anglebrac{\overline{\mm{v}}_{t}, \mm{v}_{t}} \geq 0$. The theorem then follows from the tree interpretation of $\mm{A}_k$.
\end{proof}

In~\cite{Lovasz1986discrepancy} there appears an open problem of
S\'{o}s which asks if the hereditary discrepancy of a union of two
sets systems is bounded above by the discrepancy of each individual
set system i.e.\ for set systems $(X, \mathcal{S}_1)$ and $(X,
\mathcal{S}_2)$ is it true that $\herdisc(\mathcal{S}_1\cup
\mathcal{S}_2) \leq f(\herdisc(\mathcal{S}_1),
\herdisc(\mathcal{S}_2))$ for some function $f$? It turns out that no
such bound exists. The Hoffman example\footnote{Hoffman's set system
  $\mathcal{F}$ is defined on a regular $k$-ary tree of depth $k$ and
  obtains
  $\frac{\herdisc(\mm{A}_{\mathcal{F}})}{\detlb(\mm{A_{\mathcal{F}}})}
  = \Theta\left(\frac{\log n}{\log\log n}\right)$. Let $T$ be a
  $k$-regular tree with height $k$. The universe consists of the nodes
  of $T$. Let $\mathcal{F}_1$ be the sets of all root-to-leaf paths in
  $T$, and $\mathcal{F}_2$ be the set of all sibling sets (all nodes
  with the same parent) of internal nodes in $T$. Then $\mathcal{F} =
  \mathcal{F}_1 \cup \mathcal{F}_2$. We have that
  $\herdisc(\mathcal{F}_1), \herdisc(\mathcal{F}_2) \leq 1$,
  $\detlb(\mathcal{F}) = O(1)$, and $\disc(\mathcal{F}) =
  \Omega(k/\log k)$. See~\cite{matousek2009geometric} Section 4.4.},
the example of P{\'a}lv{\"o}lgyi~\cite{palvolgyi2010indecomposable},
and the three permutations family of Newman, Neiman, and Nikolov~\cite{newman2012beck} are instances of such pairs of set systems whose individual hereditary discrepancies are at most $1$, but whose union on a universe of size $n$ has discrepancy $\Omega(\log n/\log \log n)$ (for the Hoffman example) or $\Omega(\log n)$ (for the other two).

We see that $\mm{A}_k$ --- with the decomposition into $\mm{A}_k^+$
and $\mm{A}_k^-$ --- is another similar counter-example of S\'{o}s'
conjecture. While it matches the $\Omega(\log n)$ discrepancy lower
bound of the P{\'a}lv{\"o}lgyi and Newman-Neiman-Nikolov constructions, it is simpler to analyze.
\begin{claim}\label{claim:haar-is-Sos}
    With $\mm{A}_k^\pm$ as described above Claim~\ref{cor:haar-decomp}, 
    \[\disc\left(\mm{A}_k^{\pm}\right) \gtrsim k.\] 
\end{claim}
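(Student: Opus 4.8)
The plan is to reduce $\disc(\mm{A}_k^\pm)$ to $\disc(\mm{A}_k)$ and then invoke the fact — observed in the excerpt's description of the Haar tree structure — that $\disc(\mm{A}_k) = k$. Recall that $\mm{A}_k^\pm$ is the vertical stacking of $\mm{A}_k^+$ and $\mm{A}_k^-$, and that $\mm{A}_k = \mm{A}_k^+ - \mm{A}_k^-$. Fix any coloring $\mm{x} \in \{\pm 1\}^{2^k}$. For each row index $i$, the $i$-th coordinate of $\mm{A}_k\mm{x}$ is $(\mm{A}_k^+\mm{x})_i - (\mm{A}_k^-\mm{x})_i$, so by the triangle inequality $|(\mm{A}_k\mm{x})_i| \le |(\mm{A}_k^+\mm{x})_i| + |(\mm{A}_k^-\mm{x})_i| \le 2\|\mm{A}_k^\pm\mm{x}\|_\infty$. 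Taking the maximum over $i$ gives $\|\mm{A}_k\mm{x}\|_\infty \le 2\|\mm{A}_k^\pm\mm{x}\|_\infty$, hence $\disc(\mm{A}_k^\pm, \mm{x}) \ge \tfrac12 \disc(\mm{A}_k, \mm{x})$ for every $\mm{x}$, and therefore $\disc(\mm{A}_k^\pm) \ge \tfrac12 \disc(\mm{A}_k)$.

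It then remains to justify $\disc(\mm{A}_k) = k$ (or at least $\gtrsim k$), which the excerpt asserts when introducing the Haar tree but does not prove in detail. I would spell this out: for any $\mm{x} \in \{\pm 1\}^{2^k}$, walk down the tree from the root $r$, at each internal node $i$ following the child whose edge label matches the sign $x_i$ assigned to that node's column (left child if $x_i = +1$, right child if $x_i = -1$). This produces a root-to-leaf path, and the corresponding row $\mm{a}^\top$ of $\mm{A}_k$ has its $k$ nonzero entries precisely at the columns of the internal nodes on the path, each with value equal to $x_i$ for that node. Hence $\mm{a}^\top \mm{x} = \sum_{i \text{ on path}} a_i x_i = \sum_{i} x_i^2 = k$, so $\|\mm{A}_k \mm{x}\|_\infty \ge k$. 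Since this holds for every coloring, $\disc(\mm{A}_k) \ge k$ (and $\le k$ trivially since every row has only $k$ nonzero $\pm 1$ entries). Combining with the previous paragraph yields $\disc(\mm{A}_k^\pm) \ge k/2 \gtrsim k$.

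I do not expect any serious obstacle here; the only thing requiring a little care is the bookkeeping in the tree argument — matching the recursive block structure of \eqref{eq:haardef} to the claim that the "sign-following" path hits exactly the nonzero columns of its row with the right signs — but this is a direct induction on $k$ mirroring the recursive definition. The factor of $2$ loss in passing from $\mm{A}_k^\pm$ to $\mm{A}_k$ is harmless since the statement is only up to constants. One could alternatively route through $\disc_1$ and Lemma~\ref{lem:lb-kunisky-disc1}, but that would only give $\gtrsim \sqrt{k}$, which is weaker than what is claimed, so the $L_\infty$ tree argument is the right approach.
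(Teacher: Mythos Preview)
Your proof is correct and follows essentially the same route as the paper: both argue via the triangle inequality that $\disc(\mm{A}_k^\pm) \ge \tfrac12\disc(\mm{A}_k)$ and then invoke $\disc(\mm{A}_k) \gtrsim k$. Your version additionally spells out the sign-following tree walk behind $\disc(\mm{A}_k) \ge k$, which the paper only asserts; the one wrinkle is that the special root $r$ has only a left child, so you cannot always match $x_r$ there, but this costs at most $1$ and does not affect the $\gtrsim k$ conclusion.
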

\begin{proof}
    Recall that $\disc(\mm{A}_k) = k$. We claim that $\disc(\mm{A}^\pm_k) \ge \frac12 \disc(\mm{A}_k)$, and this proves the claim. Indeed, take any coloring $\mm{x}$. Let $\mm{a}^{\top}$ be the row of $\mm{A}_k$ achieving $|\mm{a}^{\top}\mm{x}| = \disc(\mm{A}_k, \mm{x})$ and let $\mm{a}_+^{\top}$ and $\mm{a}_-^{\top}$ be the corresponding rows in the copy of $\mm{A}^{+}$ and $\mm{A}^-$ in $\mm{A}^{\pm}$ respectively. Since $\disc(\mm{A}_k) \leq |\mm{a}^{\top}\mm{x}| = |\mm{a}_+^{\top}\mm{x} - \mm{a}_-^{\top}\mm{x}|$, by the triangle inequality we have that either $|\mm{a}_+^{\top}\mm{x}| \geq \disc(\mm{A}_k)/2$ or $|\mm{a}_-^{\top}\mm{x}| \geq \disc(\mm{A}_k)/2$.
\end{proof}


\section{Upper Bound on Hereditary Discrepancy}
In this section we prove Theorem~\ref{thm:herdisc-ub}. To this end we introduce the volume lower bound on hereditary discrepancy, introduced by Lov\'{a}sz and Vesztergombi~\cite{LV89}, and, in a more general setting, by Banaszczyk~\cite{Bana-vollb}, and studied by Dadush, Nikolov, Talwar, and Tomczak-Jaegermann~\cite{dadush2018balancing}.

Let \(\mm{A}\) be an \(m\times n\) real matrix, and define the symmetric convex set $K_{\mm{A}} \coloneqq \{x \in \RR^n: \norm{\mm{Ax}}_{\infty}\leq 1\}$. Let us define the \emph{volume lower bound of $A$}, denoted $\mathrm{volLB}(A)$, by
\[\mathrm{volLB}(A) = \max_{k \in [n]}\max_{S \subseteq [n], |S| = k}\frac{1}{\mathrm{vol}_k\left(K_{\mm{A}} \cap W_S\right)^{1/k}},\]
where $W_S$ is the canonical subspace in the dimensions indexed by $S$ (i.e.\ $W_S = \mathrm{span}\left\{\mm{e}_i\mbox{, }i \in S\right\}$) and $\mathrm{vol}_k$ is the $k$-dimensional volume within $W_S$, i.e., the Lebesgue measure restricted to this subspace. We also define a dual volume lower bound by 
\[\mathrm{volLB}^*(A) = \max_{k \in [n]}\max_{S \subseteq [n], |S| = k}\frac{\mathrm{vol}_k\left(\mathrm{conv}(\pm \mm{\Pi}_S \mm{a}_1, \ldots \pm \mm{\Pi}_S \mm{a}_m)\right)^{1/k}}{c_k^{2/k}},\]
where $\mm{\Pi}_S$ is the orthogonal projection onto \(W_S\), $\mm{a_i}^\top$ is the \(i\)-th row of \(\mm{A}\), and \(c_k = \frac{\pi^{k/2}}{\Gamma(\frac{k}{2} + 1)}\) is the volume of the \(k\)-dimensional unit Euclidean ball. 

We also need the concept of a polar set of a set \(K \subseteq \RR^n\), defined as 
\[
K^\circ \coloneqq \{\mm{y} \in \RR^n: \mm{y}^\top \mm{x} \le 1 \ \forall \mm{x} \in K\}.
\]
It is a consequence of the hyperplane separator theorem that for any closed convex $K$ containing $0$, $K^{\circ\circ} = K$~\cite[Section 14]{Rockafellar}. Moreover, it is clear from the definition that $K\subseteq L$ implies $L^\circ \subseteq K^\circ$.

We have the following relationship between $\mathrm{volLB}(\mm{A})$ and $\mathrm{volLB}^*(\mm{A})$. 
\begin{claim}\label{claim:vollb-vs-vollb*}
    For any matrix $\mm{A} \in \RR^{m\times n}$, $\mathrm{volLB}(A) \cong \mathrm{volLB}^*(A)$.
\end{claim}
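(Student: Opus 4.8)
**Proof proposal for Claim~\ref{claim:vollb-vs-vollb*}.**

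The plan is to fix a subset $S \subseteq [n]$ with $|S| = k$ and show that the two quantities appearing inside the maxima — namely $\mathrm{vol}_k(K_{\mm{A}} \cap W_S)^{-1/k}$ and $\mathrm{vol}_k(\mathrm{conv}(\pm \mm{\Pi}_S \mm{a}_1, \ldots, \pm \mm{\Pi}_S \mm{a}_m))^{1/k}/c_k^{2/k}$ — agree up to a universal constant factor, for every fixed $S$; taking the maximum over $S$ then yields the claim. The bridge between the two is polarity, carried out inside the $k$-dimensional space $W_S$ (which we identify with $\RR^k$). First I would observe that $K_{\mm{A}} \cap W_S = \{\mm{x} \in W_S : |\mm{a}_i^\top \mm{x}| \le 1 \ \forall i\} = \{\mm{x} \in W_S : |(\mm{\Pi}_S \mm{a}_i)^\top \mm{x}| \le 1\ \forall i\}$, where the last equality holds because $\mm{x} \in W_S$ means $\mm{a}_i^\top \mm{x} = (\mm{\Pi}_S\mm{a}_i)^\top \mm{x}$. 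Hence, viewed within $W_S$, the body $K_{\mm{A}} \cap W_S$ is exactly the polar of $L_S \coloneqq \mathrm{conv}(\pm \mm{\Pi}_S\mm{a}_1, \ldots, \pm \mm{\Pi}_S\mm{a}_m)$: a halfspace constraint $|\mm{y}^\top\mm{x}|\le 1$ for each generator $\pm\mm{\Pi}_S\mm{a}_i$ is precisely the polar-of-a-polytope description, and $(K_{\mm{A}}\cap W_S)^\circ = L_S$ by the bipolar theorem (both sets are closed, convex, contain $0$).

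The crux is then a volume–product estimate: for a symmetric convex body $L$ in $\RR^k$ with polar $L^\circ$, one has $\mathrm{vol}_k(L)\cdot \mathrm{vol}_k(L^\circ) \cong c_k^2$ up to a factor that is bounded by an absolute constant raised to the $k$-th power. The easy direction is the Santaló inequality (or just Bourgain–Milman in the hard direction), but since we only need the bound after taking $k$-th roots, I would instead invoke the Bourgain–Milman theorem, which states $\mathrm{vol}_k(L)\,\mathrm{vol}_k(L^\circ) \ge (c'/k)^k \cdot \ldots$ — more precisely $\big(\mathrm{vol}_k(L)\,\mathrm{vol}_k(L^\circ)\big)^{1/k} \gtrsim c_k^{2/k}$ — together with the Blaschke–Santaló inequality $\mathrm{vol}_k(L)\,\mathrm{vol}_k(L^\circ) \le c_k^2$, which gives the matching upper bound. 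Combining the two: $\mathrm{vol}_k(L_S)^{1/k} \cong c_k^{2/k} / \mathrm{vol}_k(L_S^\circ)^{1/k} = c_k^{2/k}/\mathrm{vol}_k(K_{\mm{A}}\cap W_S)^{1/k}$, so that
\[
\frac{\mathrm{vol}_k(L_S)^{1/k}}{c_k^{2/k}} \cong \frac{1}{\mathrm{vol}_k(K_{\mm{A}}\cap W_S)^{1/k}}.
\]
Taking $\max_{k}\max_{|S|=k}$ of both sides preserves the $\cong$ relation, since the hidden constants are universal (independent of $k$, $S$, $m$, $n$), and this is exactly $\mathrm{volLB}^*(\mm{A}) \cong \mathrm{volLB}(\mm{A})$.

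The main obstacle is making sure the constants in the volume-product bound are genuinely universal and survive the $k$-th root — i.e. that we are allowed to use the dimension-free-after-root form of Bourgain–Milman. This is standard (the Bourgain–Milman constant is an absolute constant), but I would state it carefully as a cited lemma rather than reprove it. A secondary technical point is the identification of $W_S$ with $\RR^k$ and checking that $\mm{\Pi}_S$ is the right projection so that $L_S$ genuinely lives in $W_S$ and the polarity is taken within $W_S$ (not within $\RR^n$, where $L_S$ would be lower-dimensional and have zero $n$-dimensional volume); once that bookkeeping is set up, the argument is a one-line application of polar duality plus the volume-product inequalities. One should also handle the degenerate case where $K_{\mm{A}}\cap W_S$ is unbounded (equivalently $L_S$ is not full-dimensional in $W_S$): then $\mathrm{vol}_k(K_{\mm{A}}\cap W_S) = \infty$ and $\mathrm{vol}_k(L_S) = 0$, so both sides of the per-$S$ identity are $0$ and contribute nothing to either maximum, consistent with the claim.
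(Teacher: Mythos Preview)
Your proposal is correct and follows essentially the same approach as the paper: identify $K_{\mm{A}}\cap W_S$ and $\mathrm{conv}(\pm\mm{\Pi}_S\mm{a}_i)$ as mutual polars within $W_S$, then apply the Blaschke--Santal\'o inequality together with the reverse Santal\'o (Bourgain--Milman) inequality to control the volume product after $k$-th roots. Your treatment is, if anything, slightly more careful than the paper's in explicitly addressing the degenerate case and the identification of $W_S$ with $\RR^k$.
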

\begin{proof}
    Let $K_{\mm{A}}$ be defined as above, and let $L_{\mm{A}} \coloneqq \mathrm{conv}(\pm \mm{a}_1, \ldots \pm \mm{a}_m)$. We claim that, for any set \(S \subseteq [n]\), \((K_{\mm{A}} \cap W_S)^\circ = \mm{\Pi}_S L_{\mm{A}}\), where the polar \((K_{\mm{A}} \cap W_S)^\circ\) is taken within the subspace \(W_S\). It is sufficient to show this for $S = [n]$, as, otherwise, we can replace the matrix $\mm{A}$ by its submatrix consisting of the columns indexed by $S$. In the case $S = [n]$, we just need to show $K_{\mm{A}}^{\circ} = L_{\mm{A}}$. Notice that
    \begin{align*}
        K_{\mm{A}} &= \{\mm{x} \in \RR^{n}: \norm{\mm{A}\mm{x}}_{\infty} \leq 1\}\\
        &= \{\mm{x} \in \RR^{n}: \anglebrac{\mm{A}\mm{x}, \mm{y}} \leq 1 \mbox{ for all } \norm{\mm{y}}_1 \leq 1\}\\
        &= \{\mm{x} \in \RR^{n}: \anglebrac{\mm{x}, \mm{A}^{\top}\mm{y}} \leq 1 \mbox{ for all } \norm{\mm{y}}_1 \leq 1\}.
    \end{align*}
    By the definition of polar, we see that $K_{\mm{A}} = L_{\mm{A}}^{\circ}$ as 
    \[L_{\mm{A}} = \{\mm{A}^{\top}\mm{y}: \mm{y} \in \mathbb{R}^m\mbox{ where } \norm{\mm{y}}_1 \leq 1\}.\] 
    Thus $K_{\mm{A}}^{\circ} = L_{\mm{A}}^{\circ\circ} = L_{\mm{A}}$ as required. 
    
    Once we have established that \((K_{\mm{A}} \cap W_S)^\circ = \mm{\Pi}_S L_{\mm{A}}\), we have, by the Santal\'o-Blaschke and the reverse Santal\'o inequalities (see Chapters~1~and~8 of~\cite{ASGM15}),
    \[
    \mathrm{vol}_k(K_{\mm{A}} \cap W_S)^{1/k}\mathrm{vol}_k((K_{\mm{A}} \cap W_S)^\circ)^{1/k} \cong c_k^{2/k}.
    \]
    This completes the proof.
\end{proof}

The next lemma shows a relationship between $\mathrm{volLB}(\mm{A})$ and \(\detlb(\mm{A})\) that, as far as we are aware, has not been observed before. 
\begin{lemma}\label{lem:vollb-vs-detlb}
    For any matrix \(\mm{A} \in \RR^{m \times n}\), \(\mathrm{volLB}^*(\mm{A}) \lesssim \sqrt{n}\cdot\detlb(\mm{A}). \)
\end{lemma}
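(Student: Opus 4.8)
The plan is to bound, for each subset $S \subseteq [n]$ of size $k$, the volume $\mathrm{vol}_k(\mathrm{conv}(\pm \mm{\Pi}_S \mm{a}_1, \ldots, \pm \mm{\Pi}_S \mm{a}_m))$ from above in terms of $\detlb(\mm{A})$. Writing $L$ for this symmetric polytope living in the $k$-dimensional space $W_S$, the key point is that a symmetric convex polytope in $k$ dimensions can be triangulated into simplices, each of the form $\mathrm{conv}(0, \pm\mm{\Pi}_S\mm{a}_{i_1}, \ldots, \pm\mm{\Pi}_S\mm{a}_{i_k})$ for some choice of $k$ of the generating points with signs. Hence $\mathrm{vol}_k(L)$ is at most the number of such simplices times the maximum simplex volume; the volume of each such simplex is $\frac{1}{k!}\left|\det(\mm{M})\right|$ where $\mm{M}$ is the $k\times k$ matrix whose columns are $\mm{\Pi}_S\mm{a}_{i_1}, \ldots, \mm{\Pi}_S\mm{a}_{i_k}$. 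The crude bound on the number of simplices in such a triangulation is at most $\binom{2m}{k} \le (2m)^k$, but this is fine: after taking $k$-th roots it only contributes a factor of $2m$ raised to the $1/1$... wait — more carefully, we get $\mathrm{vol}_k(L)^{1/k} \lesssim \frac{1}{(k!)^{1/k}} (2m)^{1} \cdot \max|\det(\mm{M})|^{1/k}$, and $(2m)$ is far too large. So the crude counting bound is \emph{not} good enough, and this is the main obstacle.

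The fix is to not bound the number of simplices combinatorially but to use the fact that a triangulation of $L$ consists of interior-disjoint simplices whose union is $L$; since they tile $L$, the \emph{sum} of their volumes equals $\mathrm{vol}_k(L)$, so we need a different route entirely. Instead, I would bound $\mathrm{vol}_k(L)^{1/k}$ directly by comparing $L$ to a Euclidean ball: since every generator $\mm{\Pi}_S\mm{a}_i$ has $\ell_2$-norm at most $\|\mm{a}_i\|_2 \le \sqrt{n}\,\|\mm{A}\|_{\max}$... but $\|\mm{A}\|_{\max}$ need not be controlled by $\detlb(\mm{A})$. Let me reconsider: the clean approach is to bound $\mathrm{vol}_k(L)$ via the John position / the fact that $\mathrm{vol}_k(L) \le \max_{\mm{M}} |\det \mm{M}|$ where the max is over $k\times k$ submatrices $\mm{M}$ of the $k \times m$ matrix $\mm{\Pi}_S \mm{A}^\top$, up to a factor depending only on $k$ and $m$ — and here the right tool is the following: a symmetric polytope $L = \mathrm{conv}(\pm z_1, \ldots, \pm z_m)$ in $\RR^k$ satisfies $\mathrm{vol}_k(L) \le \frac{2^k}{k!}\binom{m}{k}\max_{|T|=k}|\det(z_i : i \in T)|$ by the standard triangulation-from-a-vertex argument, but again $\binom{m}{k}^{1/k} \lesssim \frac{em}{k}$ is too large unless we work harder.

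The genuinely correct plan, which I would carry out, is this: first relate $\max_{|T|=k}|\det(\mm{\Pi}_S\mm{a}_i : i \in T)|^{1/k}$ to $\detlb(\mm{A})$. Observe that $\mm{\Pi}_S \mm{a}_i$ is the vector of entries of $\mm{a}_i$ on coordinates in $S$, so this determinant is literally $|\det(\mm{B})|$ for a $k\times k$ submatrix $\mm{B}$ of $\mm{A}$ (rows $T$, columns $S$), hence at most $\detlb(\mm{A})^k$. Thus the per-simplex contribution is already $\le \detlb(\mm{A})^k/k!$. The remaining task is to control the number of simplices in a triangulation of $L$ \emph{that is good enough}; here I would invoke the bound that $L$, being the projection to $W_S$ of the polytope $L_{\mm{A}}$ which is polar to $K_{\mm{A}}$, has at most... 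Alternatively — and this is cleanest — use that any triangulation of a $k$-polytope with $N$ vertices into simplices with a common apex at the origin uses exactly the number of facets of $L$, and bound $\mathrm{vol}_k(L)^{1/k} \le \left(\frac{(\#\text{facets})}{k!}\right)^{1/k}\max|\det\mm{B}|$. Since $\#\text{facets} \le \binom{2m}{k}$ but we divide by $k!$ and take $k$-th roots, we get $\mathrm{vol}_k(L)^{1/k} \lesssim \frac{2m}{k}\cdot\frac{1}{e}\cdot\detlb(\mm{A}) \cdot \Theta(1)$ — still an $m/k$ loss. I now believe the intended argument instead bounds $c_k^{2/k} \cong k/e$ in the denominator of $\mathrm{volLB}^*$, and uses the volume of $L$ being at most $\frac{2^k}{k!}$ times $\max|\det|$ \emph{only over simplices in one fixed triangulation}, whose count is at most $\binom{m}{k}$ but — crucially — the bound $\binom{m}{k} \le \left(\frac{em}{k}\right)^k$ combined with dividing by $k!\cong (k/e)^k$ gives $\left(\binom{m}{k}/k!\right)^{1/k} \lesssim \frac{em}{k}\cdot\frac{e}{k} = \frac{e^2 m}{k^2}$ which is $\le \sqrt{n}$ only when $m \lesssim \sqrt n\, k^2$. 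So the honest plan: I expect the proof to use a more refined volume estimate — namely that $\mathrm{vol}_k(L) \le \frac{2^k}{k!}\,\max_{|T|=k}|\det(\mm{\Pi}_S\mm{a}_i : i\in T)|\cdot k!/k! $, i.e., the Vaaler/Ball-type inequality that a symmetric polytope generated by vectors of the form $\pm z_i$ has volume at most $2^k \max_T |\det|/k!$ \emph{with no combinatorial factor at all}, achieved by a careful inclusion argument. Granting that, $\mathrm{vol}_k(L)^{1/k} \le \frac{2}{(k!)^{1/k}}\detlb(\mm{A}) \lesssim \frac{2e}{k}\detlb(\mm{A})$, and dividing by $c_k^{2/k} \cong \frac{2\pi e}{k}$ yields $\mathrm{volLB}^*(\mm{A}) \lesssim \detlb(\mm{A})$ — even better than claimed, with no $\sqrt n$. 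Since the statement only asks for $\sqrt n$, I would be content to lose a $\sqrt n$ somewhere if the clean inequality is unavailable, e.g.\ by the crude bound $\mathrm{vol}_k(L) \le \mathrm{vol}_k(\sqrt{n}\,B_\infty^k \text{ in the right basis})$ — the main obstacle throughout is getting the counting/volume factor down to at most $\sqrt n$ after the $k$-th root, and I expect the resolution to be an application of a known symmetric-polytope volume bound (Ball's cube slicing or a Minkowski-type determinant inequality) rather than naive triangulation.
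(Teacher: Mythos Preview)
Your proposal does not arrive at a proof. The triangulation attempts you correctly diagnose as losing a factor polynomial in $m$, and the ``Vaaler/Ball-type inequality'' you hope for at the end---that $\mathrm{vol}_k(\mathrm{conv}(\pm z_1,\ldots,\pm z_m)) \le \frac{2^k}{k!}\max_{|T|=k}|\det(z_i:i\in T)|$ with no combinatorial factor---is simply false. Take $z_1,\ldots,z_m$ to be all $2^k$ vectors in $\{-1,+1\}^k$; then $L=[-1,1]^k$ has volume $2^k$, while $\max_T|\det| \le k^{k/2}$ by Hadamard's inequality, and $k^{k/2} < k!$ already for $k\ge 3$. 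In fact this same example shows the $\sqrt{n}$ in the lemma is tight: with $\mm{A}$ the $2^n\times n$ matrix of all sign patterns, $\mathrm{volLB}^*(\mm{A}) \cong n$ while $\detlb(\mm{A}) \cong \sqrt{n}$, so the conclusion $\mathrm{volLB}^*(\mm{A})\lesssim \detlb(\mm{A})$ you derive from the speculated inequality is impossible.

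The paper's argument uses a completely different tool that you did not reach: the minimum-volume (L\"owner--John) ellipsoid $E\supseteq \mm{\Pi}_S L_{\mm{A}}$ together with a Dvoretzky--Rogers/Ball type result (stated as Lemma~\ref{lem:cite-subdet}) guaranteeing that among the generators $\mm{\Pi}_S\mm{a}_1,\ldots,\mm{\Pi}_S\mm{a}_m$ there exist $k$ of them with $|\det(\mm{\Pi}_S\mm{a}_i:i\in T)| \gtrsim e^{-k/2}\,\mathrm{vol}_k(E)/c_k \ge e^{-k/2}\,\mathrm{vol}_k(\mm{\Pi}_S L_{\mm{A}})/c_k$. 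Taking $k$-th roots and using $c_k^{-1/k}\lesssim\sqrt{k}$ yields $\mathrm{volLB}^*(\mm{A})\lesssim\sqrt{k}\cdot\detlb(\mm{A})\le\sqrt{n}\cdot\detlb(\mm{A})$. The point is that the John ellipsoid absorbs the dependence on $m$ entirely; the $e^{-k/2}$ loss from the determinant lemma is exactly what produces the $\sqrt{k}$ after dividing by $c_k^{1/k}$.
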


In the proof of Lemma~\ref{lem:vollb-vs-detlb} we use the following result of Nikolov. Closely related results were shown earlier by Dvoretzky and Rogers~\cite[Theorem 5B]{DR50} and Ball~\cite[Proposition 7]{Ball89}.

\begin{lemma}\textup{(Theorem 10 in~\cite{nikolov2015randomized}).}\label{lem:cite-subdet}
    Let $m \geq n$ and \(E\subseteq \RR^n\) be a minimum volume ellipsoid containing the points \(\pm \mm{a}_1, \ldots, \pm \mm{a}_m \in \RR^n\). There exists a set \(T\subseteq [m]\) of size $n$ such that 
    \[
    |\det((\mm{a}_i)_{i \in T})|
    \ge 
    \sqrt{\frac{n!}{n^n}} \frac{\mathrm{vol}_n(E)}{c_n} 
    \cong n^{1/4} e^{-n/2} \frac{\mathrm{vol}_n(E)}{c_n},
    \]
    where $(\mm{a}_i)_{i \in T}$ is the matrix with columns $\mm{a}_i$ for \(i \in T\), and \(\mathrm{vol}_n\) is the \(n\)-dimensional Lebesgue measure.
\end{lemma}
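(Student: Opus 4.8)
The plan is to reduce to the case where the enclosing ellipsoid $E$ is the Euclidean unit ball $B_2^n$, and then extract the desired index set directly from the John decomposition of $B_2^n$ by means of an elementary-symmetric-polynomial (Maclaurin) inequality.

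\emph{Reduction to the ball.} We may assume the $\mm{a}_i$ span $\RR^n$, since otherwise $\mathrm{vol}_n(E) = 0$ and the bound is trivial. Because the point set $\{\pm\mm{a}_1,\ldots,\pm\mm{a}_m\}$ is centrally symmetric, uniqueness of the minimum-volume enclosing ellipsoid forces $E$ to be centered at the origin, so $E = M B_2^n$ for some invertible linear map $M$ with $\mathrm{vol}_n(E) = |\det M|\,c_n$. One checks that $B_2^n$ is then the minimum-volume enclosing ellipsoid of $\pm M^{-1}\mm{a}_1,\ldots,\pm M^{-1}\mm{a}_m$: applying $M$ to any competing enclosing ellipsoid and invoking minimality of $E$ shows its volume is at least $c_n$. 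If we prove the normalized statement --- that there is a set $T$ of size $n$ with $|\det((M^{-1}\mm{a}_i)_{i\in T})| \ge \sqrt{n!/n^n}$ --- then, using $\det((M^{-1}\mm{a}_i)_{i\in T}) = (\det M)^{-1}\det((\mm{a}_i)_{i\in T})$ and multiplying through by $|\det M|$, we get $|\det((\mm{a}_i)_{i\in T})| \ge |\det M|\sqrt{n!/n^n} = \frac{\mathrm{vol}_n(E)}{c_n}\sqrt{n!/n^n}$, which is the claim.

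\emph{The normalized case.} Rename $M^{-1}\mm{a}_i$ as $\mm{a}_i$, so that $B_2^n$ is the minimum-volume ellipsoid containing $\{\pm\mm{a}_1,\ldots,\pm\mm{a}_m\}$. By John's characterization of the L\"owner ellipsoid, there are finitely many contact points $\mm{u}_1,\ldots,\mm{u}_N$ --- each of which is some $\pm\mm{a}_i$ on the unit sphere --- and weights $c_1,\ldots,c_N > 0$ with $\sum_{j=1}^N c_j\,\mm{u}_j\mm{u}_j^\top = \mm{I}_n$; taking traces and using $\norm{\mm{u}_j}=1$ gives $\sum_j c_j = n$. Let $\mm{U}$ be the $n\times N$ matrix with columns $\sqrt{c_j}\,\mm{u}_j$, so $\mm{U}\mm{U}^\top = \mm{I}_n$, and apply the Cauchy--Binet formula:
\[
1 = \det(\mm{U}\mm{U}^\top) = \sum_{T\subseteq[N],\,|T|=n}\det(\mm{U}_T)^2 = \sum_{|T|=n}\Bigl(\prod_{j\in T}c_j\Bigr)\det((\mm{u}_j)_{j\in T})^2 .
\]
Writing $D^2 \coloneqq \max_{|T|=n}\det((\mm{u}_j)_{j\in T})^2$, the right-hand side is at most $D^2\sum_{|T|=n}\prod_{j\in T}c_j = D^2\,e_n(c_1,\ldots,c_N)$, the $n$-th elementary symmetric polynomial. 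Maclaurin's inequality (equivalently Newton's inequalities) gives $e_n(c) \le \binom{N}{n}\bigl(\tfrac1N\sum_j c_j\bigr)^n = \binom{N}{n}(n/N)^n$, and since $\binom{N}{n}(n/N)^n = \frac{n^n}{n!}\prod_{i=0}^{n-1}\frac{N-i}{N} \le \frac{n^n}{n!}$, we conclude $1 \le D^2\cdot\frac{n^n}{n!}$, i.e.\ $D \ge \sqrt{n!/n^n}$. As each $\mm{u}_j$ equals $\pm\mm{a}_{i(j)}$ and sign flips do not change $|\det|$, the maximizing $T$ yields $n$ of the original columns with $|\det((\mm{a}_i)_{i\in T})| \ge \sqrt{n!/n^n}$. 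Finally, Stirling's formula gives $\sqrt{n!/n^n} = (2\pi n)^{1/4}e^{-n/2}(1+o(1)) \cong n^{1/4}e^{-n/2}$, which is the asymptotic form.

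\emph{Main obstacle.} The delicate point is pinning down the right form of John's theorem: that the minimum-volume enclosing ellipsoid of a \emph{finite, symmetric} point set admits a decomposition $\sum_j c_j\,\mm{u}_j\mm{u}_j^\top = \mm{I}_n$ whose contact points $\mm{u}_j$ are among the $\pm\mm{a}_i$ themselves (not merely on the boundary of their convex hull) --- this is exactly what transfers a subdeterminant bound on the $\mm{u}_j$ to one on the original columns $\mm{a}_i$. The rest --- the linear reduction, Cauchy--Binet, and the Maclaurin step --- is routine, with the Maclaurin inequality serving as the small trick that converts the trace identity $\sum_j c_j = n$ into the clean constant $\sqrt{n!/n^n}$.
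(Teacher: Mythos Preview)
Your proof is correct. The paper does not give a self-contained argument here: it cites Theorem~10 of \cite{nikolov2015randomized} and remarks that the cited result actually produces a distribution over random \emph{multisets} $T$ with $\EE|\det((\mm{a}_i)_{i\in T})|^2 = \frac{n!}{n^n}\,\frac{\mathrm{vol}_n(E)^2}{c_n^2}$ exactly, whence some genuine set attains the bound (multisets contribute zero). Both that route and yours rest on the same two pillars --- the John decomposition $\sum_j c_j\,\mm{u}_j\mm{u}_j^\top = \mm{I}_n$ and the Cauchy--Binet identity $1=\sum_{|T|=n}\bigl(\prod_{j\in T}c_j\bigr)\det((\mm{u}_j)_{j\in T})^2$ --- and differ only in how the maximum is extracted: the cited argument samples each column index independently with probability $c_j/n$ and reads the bound off the expectation, while you bound $e_n(c)\le n^n/n!$ deterministically via Maclaurin. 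These are two sides of the same computation (the normalization $\sum_j c_j = n$ is exactly what makes both land on $n!/n^n$). Your write-up has the added virtue of spelling out why the John contact points of a finite symmetric point set must lie among the $\pm\mm{a}_i$ themselves --- strict convexity of the sphere forces every contact point to be an extreme point of the convex hull --- which is indeed the only step requiring care.
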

Note that Theorem 10 in \cite{nikolov2015randomized} in fact shows that there is a distribution on random multisets $T$ for which $\EE|\det((\mm{a}_i)_{i \in T})|^2 = \frac{n!}{n^n} \frac{\mathrm{vol}_n(E)^2}{c_n^2}$. Since the determinant is zero unless $T$ is a set, this implies Lemma~\ref{lem:cite-subdet}.

\begin{proof}[Proof of Lemma~\ref{lem:vollb-vs-detlb}]
    Take some $S\subseteq [n]$ of size $k$ such that 
    \[\mathrm{volLB}^*(A) = \frac{\mathrm{vol}_k(\mm{\Pi}_SL_{\mm{A}})^{1/k}}{c_k^{2/k}},\]
where $\mm{a}_1^\top, \ldots, \mm{a}_m^\top$ are the rows of $\mm{A}$, and $L_{\mm{A}} \coloneqq \mathrm{conv}(\pm \mm{a}_1, \ldots \pm \mm{a}_m)$. Applying Lemma~\ref{lem:cite-subdet} to $\pm \mm{\Pi}_S \mm{a}_1, \ldots, \pm \mm{\Pi}_S \mm{a}_m$, we have that, taking $E \subseteq W_S$ to be the smallest volume ellipsoid containing $\pm \mm{\Pi}_S \mm{a}_1, \ldots, \pm \mm{\Pi}_S \mm{a}_m$,  there exists a set $T \subseteq [m]$ of size $k$ for which 
\[
|\det((\mm{\Pi}_S \mm{a}_i)_{i \in T})|
    \gtrsim
     k^{1/4} e^{-k/2} \frac{\mathrm{vol}_k(E)}{c_k}
    \ge k^{1/4} e^{-k/2} \frac{\mathrm{vol}_k(\mm{\Pi}_S L_{\mm{A}})}{c_k}.
\]
The last inequality follows because $L_{\mm{A}} \subseteq E$. Re-arranging and raising to the power $1/k$, this gives us that 
\[
\mathrm{volLB}^*(A) = \frac{\mathrm{vol}_k(\mm{\Pi}_SL_{\mm{A}})^{1/k}}{c_k^{2/k}}
\lesssim
\frac{|\det((\mm{\Pi}_S \mm{a}_i)_{i \in T})|^{1/k}}{c_k^{1/k}}
\lesssim
\sqrt{k}\cdot\detlb(A),
\]
where, in the final inequality, we used that \((\mm{\Pi}_S \mm{a}_i)_{i \in T}\) is the transpose of a $k$ by $k$ submatrix of $\mm{A}$, and we also used the estimate $c_k^{-1/k}\lesssim \sqrt{k}$, which follows from Stirling's approximation. Since $k\le n$, the result follows.
\end{proof}

We remark in passing that the trivial inequality $\mathrm{vol}_k(E) \ge \mathrm{vol}_k(\mm{\Pi}_S L_{\mm{A}})$ for a $k$-dimensional symmetric convex polytope with $2m$ vertices $\mm{\Pi}_S L_{\mm{A}}$ and an ellipsoid $E$ containing it can be improved to $\mathrm{vol}_k(E) \ge \sqrt{\frac{k}{\log(2m)}}\mathrm{vol}_k(\mm{\Pi}_S L_{\mm{A}})$ when $m$ is small, using, e.g., results of Gluskin~\cite{gluskin1989extremal}. Substituting this inequality in the proof of Lemma~\ref{lem:vollb-vs-detlb} gives the bound 
\(
\mathrm{volLB}^*(\mm{A}) \lesssim \sqrt{\log 2m} \cdot \detlb(\mm{A}).
\)

The final ingredient we need for the proof of Theorem~\ref{thm:herdisc-ub} is an upper bound on the hereditary discrepancy of partial colorings in terms of the volume lower bound, due to Dadush, Nikolov, Tomczak-Jaegermann, and Talwar.
\begin{lemma}\textup{(Lem.8 in~\cite{dadush2018balancing}).}\label{lem:cite-balancing}
There exist universal constants $c \geq 1$ and $\epsilon_0 \in (0, 1)$ such that the following holds. For any closed convex set  $K \subseteq \RR^n$ satisfying \(-K = K\) and 
\[
\min_{k=1}^n\min_{S \subseteq [n]: |S| = k} \mathrm{vol}_k(K \cap W_S) \ge 1,
\]
and any $\mm{y} \in (-1, 1)^n$, there exists an $\mm{x} \in [-1, 1]^n$ with $|\mathrm{fixed}(\mm{x})| \geq \ceil{\epsilon_0n}$ and $\mm{x} - \mm{y} \in cK$, where \(\mathrm{fixed}(\mm{x}) \coloneqq \{i \in [n]: |x_i| = 1\}\).
\end{lemma}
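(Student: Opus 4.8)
The plan is to prove this the way partial‑colouring statements are usually proved — the pigeonhole/covering method of Lov\'asz--Spencer--Vesztergombi, Gluskin, and Giannopoulos. Fix absolute constants $\epsilon_0,\delta_0\in(0,1)$ with $\delta_0+H(\epsilon_0)<1$, where $H$ is the binary entropy; I aim to show that some absolute $c$ (depending only on $\delta_0$) works. The heart of the matter is a covering claim: \emph{under the volume hypothesis, the cube $[-1,1]^n$ can be covered by at most $2^{\delta_0 n}$ translates of $cK$.} Granting this, consider first $\mm y = \mm 0$: assign every vertex $\mm\chi\in\{-1,+1\}^n$ to a translate $\mm v_\ell+cK$ containing it, so by pigeonhole one translate receives a set $Q$ of at least $2^{n}/2^{\delta_0 n}=2^{(1-\delta_0)n}$ vertices. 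Any two vertices of $Q$ differ by a vector of $cK-cK=2cK$ (using $K=-K$ and convexity), and since $|Q|>2^{H(\epsilon_0)n}$ exceeds the size of any Hamming ball of radius $\epsilon_0 n$, some $\mm\chi,\mm\chi'\in Q$ differ in more than $\epsilon_0 n$ coordinates. Then $\mm x:=\tfrac12(\mm\chi-\mm\chi')\in\{-1,0,+1\}^n$ has more than $\epsilon_0 n\ge\lceil\epsilon_0 n\rceil$ coordinates equal to $\pm1$ and lies in $cK$; rescaling $c$ by $2$ gives the lemma about $\mm 0$.

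For a general starting point $\mm y\in(-1,1)^n$ I would follow the same scheme with one of two adjustments. The first is to run the pigeonhole over a family of $2^n$ (or $3^n$) points adapted to $\mm y$ that still lies in $[-1,1]^n$ — for instance the points $\mm\rho^{T,\mm s}$ with $\rho^{T,\mm s}_i=s_i$ for $i\in T$ and $\rho^{T,\mm s}_i=y_i$ otherwise, indexed by $T\subseteq[n]$ and $\mm s\in\{-1,+1\}^{T}$ — so that the covering claim applies unchanged; the only delicate point is to arrange the sign patterns so that the extracted $\mm x-\mm y$ lands in a constant dilate of $K$ rather than in $c'K$ plus a cube‑sized error, which costs only a worse absolute constant. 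The second route is to deduce from the covering claim that $\gamma_n(cK)\ge 2^{-\beta n}$ for a small absolute $\beta$ (cover $[-3,3]^n$, whose Gaussian measure is at least $2^{-\beta' n}$ for a tiny $\beta'$, by translates of $cK$, and apply Anderson's lemma), and then invoke an off‑the‑shelf partial‑colouring lemma for symmetric convex bodies of large Gaussian measure (Rothvoss; Eldan--Singh), which already permits an arbitrary starting point in $[-1,1]^n$.

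It remains to sketch the covering claim, which I expect to be the main obstacle. I would bound the covering number by the volumetric estimate $N([-1,1]^n,cK)\le\mathrm{vol}_n([-1,1]^n+\tfrac c2 K)\,/\,\mathrm{vol}_n(\tfrac c2 K)$ — coming from a maximal $cK$-separated subset of $[-1,1]^n$, whose $\tfrac c2 K$-translates are pairwise disjoint and contained in $[-1,1]^n+\tfrac c2 K$ — expand the numerator by the Steiner formula for a Minkowski sum with a box,
\[
\mathrm{vol}_n\!\Big([-1,1]^n+\tfrac c2 K\Big)=\sum_{k=0}^{n}\Big(\tfrac c2\Big)^{k}2^{\,n-k}\sum_{|S|=k}\mathrm{vol}_k\!\big(\mm\Pi_S K\big),
\]
and control the coordinate projections of $K$ by the Rogers--Shephard projection--section inequality $\mathrm{vol}_k(\mm\Pi_S K)\,\mathrm{vol}_{n-k}(K\cap W_{S^c})\le\binom nk\mathrm{vol}_n(K)$. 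This is where the hypothesis is used, in full strength: $\mathrm{vol}_{n-k}(K\cap W_{S^c})\ge1$ holds for \emph{every} coordinate subspace of \emph{every} dimension, so $\sum_{|S|=k}\mathrm{vol}_k(\mm\Pi_S K)\le\binom nk^2\mathrm{vol}_n(K)$ and the covering number is at most $\sum_{j=0}^n(4/c)^j\binom nj^2$. A routine Laplace‑type estimate shows this sum equals $2^{O(c^{-1/2})\,n}$ (its largest term sits near $j\approx n\sqrt{4/c}$), hence is at most $2^{\delta_0 n}$ once $c$ is a large enough absolute constant; finitely many small values of $n$ are absorbed by enlarging $c$. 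The crux is thus the passage from lower bounds on \emph{all} coordinate sections of $K$ to an upper bound on \emph{all} coordinate projections strong enough to keep $[-1,1]^n+cK$ within a factor $2^{\delta_0 n}$ of $cK$; the sign bookkeeping needed in the first route to general $\mm y$ is a secondary, purely combinatorial, complication.
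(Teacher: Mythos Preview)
The paper does not prove this lemma; it is quoted as Lemma~8 of~\cite{dadush2018balancing} and used as a black box in the proof of Theorem~\ref{thm:herdisc-ub}. There is nothing in the present paper to compare your argument against.

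Your sketch is the Gluskin--Giannopoulos covering argument, and the main step is correct: the Steiner expansion of $\mathrm{vol}_n([-1,1]^n+\tfrac{c}{2}K)$ in terms of coordinate projections of $K$ is valid, the Rogers--Shephard projection/section inequality is exactly the tool that converts the hypothesis on all coordinate \emph{sections} into bounds on all coordinate \emph{projections}, and the resulting estimate $N([-1,1]^n,cK)\le\sum_j(4/c)^j\binom{n}{j}^2\le 2^{\delta_0 n}$ for large $c$ goes through as you describe. The pigeonhole plus Hamming-ball extraction for $\mm y=\mm 0$ is standard and correct.

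The only genuine gap is the passage to arbitrary $\mm y\in(-1,1)^n$. Route~1 is too loose as written: differencing two of the points $\mm\rho^{T,\mm s}$ does not obviously produce a vector of the form $\mm x-\mm y$ with $\mm x\in[-1,1]^n$ having $\ge\lceil\epsilon_0 n\rceil$ coordinates equal to $\pm1$, and you have not said how to repair this. Route~2 is fine: redoing the covering for $[-3,3]^n$ only replaces $4/c$ by $12/c$, so for large $c$ you still get at most $2^{\delta_0 n}$ translates, whence some translate has Gaussian mass at least $\gamma_n([-3,3]^n)/2^{\delta_0 n}\ge 2^{-\beta n}$ with $\beta$ as small as desired; Anderson's lemma then gives $\gamma_n(cK)\ge 2^{-\beta n}$, and the Rothvoss or Eldan--Singh partial-colouring theorem (both allow an arbitrary starting point and predate~\cite{dadush2018balancing}) finishes the job without circularity.
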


We are now ready to complete the proof. 
\begin{proof}[Proof of Theorem~\ref{thm:herdisc-ub}]
It suffices to show that \(\disc(\mm{A}) \lesssim \sqrt{n}\ \detlb(\mm{A})\), since this implies that for any submatrix \(\mm{B}\) of \(\mm{A}\) with \(k\) columns we also have \[\disc(\mm{B}) \lesssim \sqrt{k}\ \detlb(\mm{B})\le \sqrt{n} \ \detlb(\mm{A}).\]

Using Lemma~\ref{lem:cite-balancing}, we construct a sequence of partial colorings $\mm{x}_0 = \mathbf{0}, \ldots, \mm{x}_T \in \{-1,+1\}^n$, where \(T \lesssim 1+ \log_{1/(1-\epsilon_0)}(n)\), each \(\mm{x}_t \in [0,1]^n\), and 
\begin{equation}\label{eq:partial-ub}
    \|\mm{A}(\mm{x}_t - \mm{x}_{t-1})\|_\infty \lesssim \sqrt{n(1-\epsilon_0)^{t-1}}\ \detlb(\mm{A}).
\end{equation}
To construct \(\mm{x}_1\), we apply Lemma~\ref{lem:cite-balancing} to \(\mm{y} \coloneqq \mathbf{0}\), and \(K \coloneqq \mathrm{volLB}(A)\cdot K_{\mm{A}}\). By the definition of \(\mathrm{volLB}(A)\), this \(K\) satisfies the assumption of the lemma, and we let \(\mm{x}_1\) equal the \(\mm{x}\) guaranteed by the lemma. Since \(\mm{x}_1 \in c K = c\cdot \mathrm{volLB}(A)\cdot K_{\mm{A}}\), by the definition of \(K_{\mm{A}}\) we have that 
\[
\|\mm{A}\mm{x}_1\|_\infty \le c\cdot \mathrm{volLB}(A)
\cong \mathrm{volLB}^*(A) \lesssim \sqrt{n}\ \detlb(\mm{A}),
\]
where the last two inequalities follow, respectively, by Claim~\ref{claim:vollb-vs-vollb*} and by Lemma~\ref{lem:vollb-vs-detlb}. In general, to get the bound \eqref{eq:partial-ub} for \(\mm{x}_t - \mm{x}_{t-1}\) for \(t \ge 2\), we set \(S \coloneqq [n] \setminus \mathrm{fixed}(\mm{x}_{t-1})\), and apply Lemma~\ref{lem:cite-balancing} with \(\mm{y} \coloneqq \mm{\Pi}_S\mm{x}_1\), and \(K \coloneqq \mathrm{volLB}(\mm{A}_S)\cdot K_{\mm{A}_S}\), where \(\mm{A}_S\) is the submatrix of \(\mm{A}\) consisting of the columns indexed by \(S\). If \(\mm{x} \in [-1,+1]^S\) is the partial coloring guaranteed by the lemma, we define \(\mm{x}_t\) by setting its coordinates in \(S\) to equal the corresponding coordinates in \(\mm{x}\), and the remaining coordinates to equal the corresponding coordinates in \(\mm{x}_{t-1}\). It is straightforward to check that \(\mathrm{fixed}(\mm{x}_t) \ge (1-(1-\epsilon_0)^t)n\) and \eqref{eq:partial-ub} hold for all \(t\). Moreover, once \(t \ge T\ge 1 + \log_{1/(1-\epsilon_0)}(n)\), we must have \(\mm{x}_t \in \{-1,+1\}^n\).

Having constructed \(\mm{x}_1, \ldots, \mm{x}_T\), we observe that, by \eqref{eq:partial-ub} and the triangle inequality,
\[
\|\mm{A}\mm{x}\|_\infty 
\lesssim \sqrt{n}\cdot\detlb(\mm{B})\left(1 + \sqrt{(1-\epsilon_0)} + \sqrt{(1-\epsilon_0)^2} + \cdots \right)\cong \sqrt{n}\cdot \detlb(\mm{A}).\]
This completes the proof.
\end{proof}

\color{black}
\section*{Acknowledgements}
The authors would like to thank the anonymous reviewers for their many useful comments. In particular, suggestions for claim~\ref{claim:vollb-vs-vollb*} helped clean up the proof. 


\bibliographystyle{alpha}
\bibliography{bibliography.bib}
\appendix
\section{Upper Bound on Hereditary Discrepancy for Set Systems}

\modified{Here we give a different proof of the special case of Theorem~\ref{thm:herdisc-ub} for matrices $\mm{A}$ with entries in $\{0,1\}$, i.e., incidence matrices of set systems. This proof is more elementary, and has the interesting property that the  upper bound on discrepancy is certified by a uniformly random coloring.}

We need is a connection between the hereditary discrepancy of a set system $\mathcal{S}$ on a universe $X$, and its VC dimension. Recall that the VC dimension, denoted $\dim(\mathcal{S})$, is the largest size of a set $Y\subseteq X$ such that the restriction $\mathcal{S}|_Y \coloneqq \{S \cap Y: S \in \mathcal{S}\}$ equals the powerset of $Y$. Equivalently, we can define the VC dimension $\dim(\mm{A})$ of an incidence matrix $\mm{A} \in \{0,1\}^{m\times n}$ as the largest $N$ for which the power matrix $\mm{P}_N$ can be found as a submatrix of $\mm{A}$. 

\subsection{Upper Bound in Terms of VC Dimension}

Let us introduce some terminology from stochastic processes that we use in our proof. For a metric space $(T, d)$, let $\mathcal{N}(T, d, \epsilon)$ be the \emph{covering number of $T$} i.e.\ $\mathcal{N}(T, d, \epsilon)$ is the smallest number of closed balls with centers in $T$ and radii $\epsilon$ whose union covers $T$. Further, let $\norm{Y}_{\psi_2}$ be the sub-gaussian norm of a real-valued random variable $Y$ where $\norm{Y}_{\psi_2} \coloneqq \inf\{t \geq 0: \EE \exp\left(Y^2/t^2\right) \leq 2\}$. Finally, for a class of real-valued functions $\mathcal{F}$ defined on a probability space $(\Omega, \mu)$, where \(\Omega\) is a finite set, we define the $L^2(\mu)$ norm by
\(
\|f\|_{L^2(\mu)} := \left(\sum_{\omega \in \Omega} |f(\omega)|^2\mu(\omega)\right)^{1/2}.
\)
If $\mu$ is the uniform measure on $\Omega$, then we simple write $L^2$ rather than $L^2(\mu)$. 

The following result is a consequence of well-known lemmas. We recount it here for completeness.

\begin{lemma}\label{lem:herdisc-vc-ub}
    \modified{For any non-constant matrix $\mm{A}\in \{0,1\}^{m\times n}$, 
    \[\herdisc(\mm{A}) \lesssim\sqrt{n\cdot\dim(\mm{A})}.\]}
\end{lemma}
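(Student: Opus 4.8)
The plan is to bound the hereditary discrepancy of $\mm{A}$ by analyzing a uniformly random $\pm 1$ coloring and controlling the maximum of the resulting (signed) row sums via a chaining argument, where the geometry of the index set is governed by the VC dimension. Fix any submatrix $\mm{B}$ of $\mm{A}$ with column set $Y \subseteq [n]$, $|Y| = \ell \le n$; since adding rows never decreases discrepancy we may as well keep all $m$ rows, so $\mm{B} \in \{0,1\}^{m \times \ell}$ and $\dim(\mm{B}) \le \dim(\mm{A})$. Let $\chi \in \{\pm 1\}^{Y}$ be uniformly random. For each row $\mm{b}_i^\top$ of $\mm{B}$, the quantity $\mm{b}_i^\top \chi$ is a sum of at most $\ell$ independent Rademacher variables, hence a mean-zero sub-gaussian random variable with $\|\mm{b}_i^\top\chi\|_{\psi_2} \lesssim \sqrt{|S_i|} \le \sqrt{\ell}$, where $S_i$ is the support of $\mm{b}_i$. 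We want to show $\EE \max_{i \in [m]} |\mm{b}_i^\top \chi| \lesssim \sqrt{\ell \cdot \dim(\mm{B})}$, since then some coloring achieves $\disc(\mm{B}) \lesssim \sqrt{\ell\,\dim(\mm{B})} \le \sqrt{n\,\dim(\mm{A})}$, and taking the max over all submatrices $\mm{B}$ gives the claim.

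To estimate this expected maximum I would use Dudley's entropy integral. View the rows as points $\mm{b}_i$ in the metric space $(T, d)$ where $T = \{\mm{b}_1, \dots, \mm{b}_m\}$ and $d(\mm{b}_i, \mm{b}_j) = \|\mm{b}_i - \mm{b}_j\|_2$ is the $L^2$ distance (up to the normalization $\sqrt{\ell}$, this is exactly the $L^2(\mu)$ distance for $\mu$ uniform on the $\ell$ columns). The process $\chi \mapsto \mm{b}_i^\top\chi$ is sub-gaussian with respect to this metric, so Dudley's inequality gives
\[
\EE \max_{i} |\mm{b}_i^\top\chi| \lesssim \int_0^{\mathrm{diam}(T)} \sqrt{\log \mathcal{N}(T, d, \epsilon)}\; d\epsilon .
\]
The diameter is at most $2\sqrt{\ell}$. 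The key input is a bound on the covering numbers of $T$ in $L^2$: by the Haussler packing bound (a standard consequence of VC theory — if $\mathcal{S}|_Y$ has VC dimension $d$, then any set of rows that are pairwise $\epsilon$-separated in normalized $L^2$ has size at most $(C/\epsilon)^{Cd}$ for a universal constant $C$), we get $\log \mathcal{N}(T, d, \epsilon) \lesssim \dim(\mm{B}) \cdot \log(\sqrt{\ell}/\epsilon)$ for $\epsilon \le \sqrt{\ell}$. Plugging this into the entropy integral and substituting $\epsilon = \sqrt{\ell}\, u$,
\[
\int_0^{2\sqrt{\ell}} \sqrt{\dim(\mm{B})\log(\sqrt{\ell}/\epsilon)}\; d\epsilon
= \sqrt{\ell}\sqrt{\dim(\mm{B})} \int_0^{2} \sqrt{\log(1/u)}\; du \lesssim \sqrt{\ell\cdot\dim(\mm{B})},
\]
since $\int_0^2 \sqrt{\log(1/u)}\,du$ converges to an absolute constant. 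This yields the desired bound on the expected maximum.

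The main obstacle — really the only nontrivial point — is establishing the covering number bound, i.e.\ that the $L^2$ metric entropy of the rows at scale $\epsilon$ is $O(\dim(\mm{A})\log(1/\epsilon))$; this is Haussler's packing lemma, and one has to be slightly careful because the natural VC bound is for the $L^1$ (Hamming) metric on $\{0,1\}$-vectors, but for $0/1$ vectors normalized $L^1$ and normalized $L^2$ squared coincide, so $\epsilon$-separation in normalized $L^2$ is $\epsilon^2$-separation in normalized $L^1$, and Haussler's bound $\mathcal{N} \lesssim (1/\delta)^{O(d)}$ at Hamming scale $\delta = \epsilon^2$ still gives $\log \mathcal{N} \lesssim d\log(1/\epsilon)$ after taking logarithms. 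Everything else (sub-gaussianity of Rademacher sums, Dudley's inequality, convergence of the entropy integral, and passing from expectation to the existence of a good coloring) is standard. I would cite the relevant textbook statements of Dudley's inequality and Haussler's lemma rather than reprove them, consistent with the remark that this lemma "is a consequence of well-known lemmas."
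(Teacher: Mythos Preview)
Your proposal is correct and follows essentially the same approach as the paper: both bound the expected discrepancy of a uniformly random $\pm 1$ coloring via Dudley's entropy integral, using the VC-dimension covering number bound to control the metric entropy and then integrating. The only cosmetic differences are normalization (the paper divides by $\sqrt{n}$ upfront and works with the normalized $L^2$ metric on $[0,1]$) and citation (the paper invokes Vershynin's $L^2$ covering-number theorem directly, whereas you route through Haussler's $L^1$ packing lemma and convert; both yield the same $\log\mathcal{N}\lesssim d\log(1/\epsilon)$).
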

\begin{proof}
    \modified{
    It is enough to prove that $\disc(\mm{A}) \lesssim \sqrt{n\cdot\dim(\mm{A})}$ since, applying this inequality to any submatrix $\mm{B}$ consisting of a subset of $k$ columns from $\mm{A}$ shows that, $\disc(\mm{B}) \lesssim \sqrt{k\cdot\dim(\mm{B})} \le \sqrt{n\cdot\dim(\mm{A})}$.}
    
    \modified{We prove that $\disc(\mm{A}, \mm{x}) \lesssim \sqrt{n\cdot\dim(\mm{A})}$ is satisfied in expectation by a uniformly random coloring $\mm{x} \in \{-1, +1\}^{n}$ with entries $X_1, X_2, ... X_{n}$ which are independent Rademacher random variables (i.e., uniform in \(\{-1,+1\}\)). Let $\mathcal{F}$ denote the class of indicator functions defined by the rows of $\mm{A}$, i.e., for every $i \in [m]$, we define a function $f_{i}: \left[n\right] \rightarrow \{0,1\}$ given by $f_i(j) = a_{i,j}$.} We will show that 
    \[\EE\sup_{f\in \mathcal{F}} \left|\frac{1}{\sqrt{n}}\sum_{i = 1}^{n}X_if\left(i\right)\right| \lesssim \sqrt{\dim(\mathcal{F})}.\]
    
    For each indicator function $f$, let the random variable $Z_f \coloneqq \left|\frac{1}{\sqrt{n}}\sum_{i=1}^{n} X_if(i)\right|$. Note that this differs from the discrepancy of the row indicated by $f$ by a multiple of $\sqrt{n}$, i.e., $\disc(\mm{A},\mm{x}) = \sqrt{n} \cdot \sup_{f \in \mathcal{F}} Z_f$. Consider the random process $(Z_f)_{f\in \mathcal{F}}$. We will apply Dudley's inequality (Lemma~\ref{lem:v-dudley}) to show that 
    \begin{equation}\label{eq:dudley}
        \EE\sup_{f \in \mathcal{F}} Z_f \lesssim \int_0^1\sqrt{\log \mathcal{N}\left(\mathcal{F}, L^2, \epsilon\right)}d\epsilon.
    \end{equation}
    In order to apply Dudley's inequality we must show that $(Z_f)_{f\in \mathcal{F}}$ has sub-gaussian increments. Note that, since $\norm{X_i}_{\psi_2} \lesssim 1$, we have 
    \begin{align*}
        \norm{Z_f - Z_g}_{\psi} &= \frac{1}{\sqrt{n}}\left\|\left|\sum_{i=1}^{n}X_if(i)\right|-\left|\sum_{i=1}^{n}X_ig(i)\right|\right\|_{\psi_2}\\ 
        &\leq \frac{1}{\sqrt{n}}\left\|\sum_{i=1}^{n}X_i(f-g)(i)\right\|_{\psi_2} \lesssim \left(\frac{1}{n}\sum_{i=1}^n(f-g)(i)^2\right)^{1/2},
    \end{align*}
    where the second step follows from the reverse triangle inequality, and the final step by Hoeffding's lemma. The right hand side is $\norm{f - g}_{L^2}$ so when we apply Dudley's inequality as shown in Lemma~\ref{lem:v-dudley}, we obtain Equation~\ref{eq:dudley}. 
    
    Using Theorem~\ref{thm:v-covering-number}, we can bound the covering number with respect to the normalized $L_2$ norm as 
    \[\log \mathcal{N}\left(\mathcal{F}, L^2, \epsilon\right) \lesssim \dim(\mathcal{F})\log \left(\frac{2}{\epsilon}\right).\]
    Plugging the right hand side into the integral in Equation~\ref{eq:dudley} and integrating, we have $\EE \sup_{f \in \mathcal{F}}Z_f \lesssim \sqrt{\dim(\mathcal{F})}$. Recall that the discrepancy of the row indicated by $f$ is $\sqrt{n}\cdot Z_f$, thus the hereditary discrepancy is bounded above as $\herdisc(\mm{A}) \lesssim\sqrt{n\dim(\mathcal{F})}$, as was our goal.
\end{proof}

\begin{lemma}{\textup{(Dudley's Inequality, Remark 8.1.5~\cite{vershynin2018high}).}}\label{lem:v-dudley}
    Let $(X_t)_{t\in T}$ be a random process on a metric space $(T, d)$ with sub-gaussian increments i.e.\ there exists a $K \geq 0$ such that $\norm{X_t - X_s}_{\psi_2} \leq Kd(t,s)$ for all $t, s \in T$. Then 
    \[\EE \sup_{t,s \in T}\left|X_t - X_s\right| \lesssim K\int_0^{\infty}\sqrt{\log \mathcal{N}(T, d, \epsilon)}d\epsilon.\]
\end{lemma}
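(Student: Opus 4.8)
The plan is to prove Dudley's inequality by the classical \emph{chaining} argument; I will carry it out for finite $T$, which is the only case needed here (in the application $T = \mathcal{F}$ is finite), the general case following by a routine reduction to a countable dense subset together with monotone convergence. By rescaling the metric we may assume $K = 1$. Fix a base point $t_0 \in T$; since $\sup_{t,s\in T}|X_t - X_s| \le 2\sup_{t\in T}|X_t - X_{t_0}|$, it suffices to bound $\EE\sup_{t}|X_t - X_{t_0}|$. Let $\Delta = \mathrm{diam}(T, d)$, and note $\mathcal{N}(T,d,\epsilon) = 1$ for $\epsilon \ge \Delta$, so the right-hand integral is effectively $\int_0^{\Delta}$. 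For each integer $k \ge 0$ set $\epsilon_k = 2^{-k}\Delta$ and fix an $\epsilon_k$-net $T_k \subseteq T$ with $|T_k| = \mathcal{N}(T,d,\epsilon_k)$; a single point serves as $T_0$ since every point of $T$ is within $\Delta$ of $t_0$. Let $\pi_k: T \to T_k$ map each point to a nearest net point, so $d(t, \pi_k(t)) \le \epsilon_k$ and $\pi_0 \equiv t_0$.

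Now chain: because $T$ is finite, $\pi_k(t) = t$ for all large $k$, so $X_t - X_{t_0} = \sum_{k\ge 1}(X_{\pi_k(t)} - X_{\pi_{k-1}(t)})$. Each link satisfies $d(\pi_k(t), \pi_{k-1}(t)) \le \epsilon_k + \epsilon_{k-1} = 3\epsilon_k$, hence $\|X_{\pi_k(t)} - X_{\pi_{k-1}(t)}\|_{\psi_2}\le 3\epsilon_k$; and as $t$ ranges over $T$, the pair $(\pi_k(t), \pi_{k-1}(t))$ takes at most $|T_k|\,|T_{k-1}| \le \mathcal{N}(T,d,\epsilon_k)^2$ values, using that covering numbers are non-increasing in $\epsilon$. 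Applying the standard maximal inequality $\EE\max_{j\le M}|Y_j| \lesssim \sigma\sqrt{1 + \log M}$ for sub-gaussian $Y_j$ with $\|Y_j\|_{\psi_2}\le\sigma$ (proved via the moment generating function and Jensen's inequality) to the links at level $k$ gives $\EE\sup_t|X_{\pi_k(t)} - X_{\pi_{k-1}(t)}| \lesssim \epsilon_k\sqrt{1 + \log\mathcal{N}(T,d,\epsilon_k)}$, so by the triangle inequality $\EE\sup_t|X_t - X_{t_0}| \lesssim \sum_{k\ge 1}\epsilon_k\sqrt{1 + \log\mathcal{N}(T,d,\epsilon_k)}$.

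It remains to bound this sum by the integral. Using $\sqrt{1+a}\le 1 + \sqrt a$, split it into $\sum_{k\ge 1}\epsilon_k = \Delta$ and $\sum_{k\ge 1}\epsilon_k\sqrt{\log\mathcal{N}(T,d,\epsilon_k)}$. For the first piece, observe that $\mathcal{N}(T,d,\epsilon)\ge 2$ whenever $\epsilon < \Delta/2$ (a ball of radius $\epsilon$ centered in $T$ has diameter at most $2\epsilon < \Delta$, so one ball cannot cover $T$), whence $\Delta \lesssim \int_0^{\Delta/2}\sqrt{\log 2}\,d\epsilon \le \int_0^{\infty}\sqrt{\log\mathcal{N}(T,d,\epsilon)}\,d\epsilon$. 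For the second piece, since $\mathcal{N}(T,d,\cdot)$ is non-increasing, $\epsilon_k\sqrt{\log\mathcal{N}(T,d,\epsilon_k)} = 2(\epsilon_k - \epsilon_{k+1})\sqrt{\log\mathcal{N}(T,d,\epsilon_k)} \le 2\int_{\epsilon_{k+1}}^{\epsilon_k}\sqrt{\log\mathcal{N}(T,d,\epsilon)}\,d\epsilon$, and summing over $k\ge 1$ telescopes the intervals to give at most $2\int_0^{\infty}\sqrt{\log\mathcal{N}(T,d,\epsilon)}\,d\epsilon$. Combining the two pieces, doubling to pass from $X_t - X_{t_0}$ to $X_t - X_s$, and reinstating $K$ yields $\EE\sup_{t,s}|X_t - X_s| \lesssim K\int_0^{\infty}\sqrt{\log\mathcal{N}(T,d,\epsilon)}\,d\epsilon$.

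The only genuinely delicate points are organizational. First, the telescoping identity and the interchange of supremum and summation inside the expectation are immediate for finite $T$ but for general $(T,d)$ require restricting to a countable dense subset, exhausting it by finite subsets, and invoking monotone convergence — a step I would relegate to a remark. Second, one must set up the bookkeeping so that the spurious $\sqrt{1+\log\mathcal{N}}$ terms collapse to the clean integrand $\sqrt{\log\mathcal{N}}$; the key is precisely the observation that $\mathrm{diam}(T,d)$ is itself dominated by the Dudley integral, which I expect to be the main (if minor) obstacle to a clean writeup.
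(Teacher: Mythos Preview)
The paper does not prove this lemma at all: it is quoted verbatim from Vershynin's book and used as a black box, so there is no ``paper's own proof'' to compare against. Your chaining argument is the standard one and is correct; in particular the bookkeeping that converts $\sum_k \epsilon_k\sqrt{1+\log\mathcal{N}(T,d,\epsilon_k)}$ into the clean Dudley integral via the diameter bound and the telescoping Riemann-sum estimate is carried out properly.
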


\begin{theorem}\textup{(Covering Numbers via VC Dimension, 8.3.18~\cite{vershynin2018high}).}\label{thm:v-covering-number}
    Let $\mathcal{F}$ be a class of Boolean functions on a probability space $(\Omega, \Sigma, \mu)$. Then, for every $\epsilon \in (0, 1)$, we have
    \[\mathcal{N}\left(\mathcal{F}, L^2(\mu), e\right) \leq \left(\frac{2}{\epsilon}\right)^{C\cdot\dim(\mathcal{F})}\]
    for an absolute constant $C$.
\end{theorem}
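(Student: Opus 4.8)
The plan is to prove this the classical way --- going back to Dudley and Haussler, and matching the argument in Vershynin's book --- by reformulating the covering number as a packing number, extracting a finite witness set via random sampling, and then invoking the Sauer--Shelah lemma. First I would pass from covering to packing: a maximal $\epsilon$-separated subset of $\mathcal{F}$ in $L^2(\mu)$ is automatically an $\epsilon$-net, so it suffices to upper bound $M$, the largest size of a family $f_1, \ldots, f_M \in \mathcal{F}$ with $\|f_i - f_j\|_{L^2(\mu)} > \epsilon$ for all $i \neq j$. Since each $f_i$ is $\{0,1\}$-valued, $|f_i(\omega) - f_j(\omega)|^2 = \mathbf{1}\{f_i(\omega) \neq f_j(\omega)\}$, hence $\|f_i - f_j\|_{L^2(\mu)}^2 = \mu(f_i \neq f_j)$, and the separation condition is exactly $\mu(f_i \neq f_j) > \epsilon^2$ for all $i \neq j$.

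The heart of the argument is a random sampling step. Draw $\omega_1, \ldots, \omega_n$ independently from $\mu$. For fixed $i \neq j$, the chance that $f_i$ and $f_j$ agree on the entire sample is $(1 - \mu(f_i \neq f_j))^n < (1-\epsilon^2)^n \leq e^{-n\epsilon^2}$, so a union bound over the fewer than $M^2$ pairs shows that whenever $n > 2\epsilon^{-2}\ln M$, with positive probability all the restrictions $f_i|_{\{\omega_1, \ldots, \omega_n\}}$ are distinct --- i.e., there are $n$ points on which $\mathcal{F}$ realizes at least $M$ distinct patterns. On the other hand, writing $d \coloneqq \dim(\mathcal{F})$, the Sauer--Shelah lemma bounds the number of patterns $\mathcal{F}$ can realize on any $n$ points by $\sum_{k=0}^{d}\binom{n}{k} \leq (en/d)^d$ once $n \geq d \geq 1$. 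Taking $n = \lceil 2\epsilon^{-2}\ln M\rceil + 1$ then yields the implicit inequality $M \leq (en/d)^d$ with $n \lesssim \epsilon^{-2}\ln M$.

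It remains to unwind this for $M$. Taking logarithms, $\ln M \leq d\ln(en/d) \lesssim d\bigl(\ln(2/\epsilon) + \ln\ln M + O(1)\bigr)$, and since $\ln\ln M$ is negligible against $\ln M$ --- formally, $\ln x \leq \tfrac{1}{2d}x + O(\ln d)$ by concavity lets one absorb it --- we get $\ln M \lesssim d\ln(2/\epsilon)$, i.e., $M \leq (2/\epsilon)^{Cd}$ for an absolute constant $C$. The remaining cases are trivial: if $\epsilon \geq 1$ then $\mathcal{N}(\mathcal{F}, L^2(\mu), \epsilon) = 1$ because Boolean functions are within $L^2(\mu)$-distance $1$ of each other; if $d = 0$ then $\mathcal{F}$ has at most one element up to $\mu$-null changes; and if the chosen $n$ falls below $d$, Sauer--Shelah only gives $2^n$, but then $2\epsilon^{-2}\ln M < d$ forces $M < e^{d/2} \leq (2/\epsilon)^{Cd}$ directly. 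I expect the only genuinely fiddly part to be this final bookkeeping --- absorbing the doubly-logarithmic term and dispatching the $n < d$ regime --- while the sampling and Sauer--Shelah steps are standard.
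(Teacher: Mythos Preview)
Your proof is correct and follows the standard Dudley--Haussler argument that Vershynin presents. However, the paper does not prove this statement at all: it is quoted verbatim as Theorem~8.3.18 from Vershynin's book and used as a black box in the proof of Lemma~\ref{lem:herdisc-vc-ub}. So there is no ``paper's own proof'' to compare against --- you have supplied a full proof where the paper simply cites one.
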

We note that Lemma~\ref{lem:herdisc-vc-ub} can likely be improved
further, for example by following the techniques of
Matou\v{s}ek~\cite{matousek95tight}, and carefully tracking constants.

\subsection{Connection to the Determinant Lower Bound}

\modified{To finish the proof, it remains to show a connection between VC dimension and the determinant lower bound. To do so, we show that a matrix \(\mm{A}\in \{0,1\}^{m\times n}\) with large VC dimension must contain a submatrix with large determinant. This submatrix is a binary version of the Hadamard matrix, described next.}

\modified{Let the $0$-$1$ Hadamard matrix be the $\{0,1\}$ matrix obtained by applying the linear map $a \mapsto (a + 1)/2$ to all of the entries in the standard $\pm 1$ Hadamard matrix. Denote the $n\times n$ $0$-$1$ and standard Hadamard matrices by $\tilde{\mm{H}}_n$ and $\mm{H}_n$ respectively. We prove the following. 
\begin{claim}\label{claim:01-hadamard-determinant}
    $\left|\det\left(\tilde{H}_n\right)\right| \geq 2^{-n}\cdot n^{n/2}$.
\end{claim}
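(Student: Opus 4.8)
The plan is to express $\tilde{\mm H}_n$ as a rank-one perturbation of $\mm H_n$ and evaluate its determinant exactly. Writing $\mm J = \mathbf 1\mathbf 1^\top$ for the all-ones $n\times n$ matrix, the entrywise map $a \mapsto (a+1)/2$ gives $\tilde{\mm H}_n = \tfrac12(\mm H_n + \mm J)$, so
\[
\det(\tilde{\mm H}_n) = 2^{-n}\det(\mm H_n + \mathbf 1\mathbf 1^\top),
\]
and it suffices to compute $\det(\mm H_n + \mathbf 1\mathbf 1^\top)$.

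For this I would invoke the matrix determinant lemma. The matrix $\mm H_n$ is nonsingular, since $\mm H_n\mm H_n^\top = n\mm I_n$ and hence $\mm H_n^{-1} = n^{-1}\mm H_n^\top$; therefore
\[
\det(\mm H_n + \mathbf 1\mathbf 1^\top) = \det(\mm H_n)\bigl(1 + \mathbf 1^\top\mm H_n^{-1}\mathbf 1\bigr) = \det(\mm H_n)\Bigl(1 + \tfrac1n\,\mathbf 1^\top\mm H_n^\top\mathbf 1\Bigr).
\]
The standard (Sylvester--Walsh) Hadamard matrix has all-ones first row and column, and all of its other rows are orthogonal to $\mathbf 1$; hence $\mm H_n^\top\mathbf 1 = n\,\mm e_1$, so $\mathbf 1^\top\mm H_n^\top\mathbf 1 = n$ and the bracketed factor equals $2$. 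Combined with the classical fact $|\det(\mm H_n)| = n^{n/2}$ (the rows of $\mm H_n$ are orthogonal with Euclidean norm $\sqrt n$), this yields
\[
|\det(\tilde{\mm H}_n)| = 2^{-n}\cdot 2\cdot n^{n/2} = 2^{1-n}\,n^{n/2} \ \ge\ 2^{-n}\,n^{n/2},
\]
which is exactly the statement of Claim~\ref{claim:01-hadamard-determinant}, in fact with a spare factor of $2$.

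There is no genuine difficulty here; the two points needing care are (i) fixing the convention that $\mm H_n$ is the Sylvester--Walsh--Hadamard matrix, so that $n$ is a power of two and the all-ones-first-row property is available --- this is precisely what forces the rank-one correction term $1 + \mathbf 1^\top\mm H_n^{-1}\mathbf 1$ to equal $2$ rather than possibly $0$ (as it would for a sign-flipped Hadamard matrix) --- and (ii) checking the hypothesis of the matrix determinant lemma, namely $\det(\mm H_n)\neq 0$, which is immediate from $\mm H_n\mm H_n^\top = n\mm I_n$. If one prefers to avoid the lemma, the same value can be obtained by elementary row reduction: subtracting the all-ones first row of $\tilde{\mm H}_n$ from each later row turns row $i\ge 2$ into $\tfrac12(\mm h_i - \mathbf 1)$ (with $\mm h_i$ the $i$-th row of $\mm H_n$), whose first coordinate vanishes, so the first column becomes $\mm e_1$; Laplace expansion along it reduces the problem to a determinant of a smaller $\{0,-1\}$ matrix that a second rank-one computation resolves, again giving $|\det(\tilde{\mm H}_n)| = 2^{1-n}n^{n/2}$.
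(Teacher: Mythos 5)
Your proof is correct, and it even yields the exact value $2^{1-n}n^{n/2}$, a factor of $2$ better than the stated bound. The route is genuinely different from the paper's. The paper argues by elementary row reduction: take $\mm{H}_n$ with all-ones first row, add that row to each of the other $n-1$ rows so their entries land in $\{0,2\}$, then halve those $n-1$ rows to obtain $\tilde{\mm{H}}_n$, tracking how each operation rescales the determinant. You instead write $\tilde{\mm{H}}_n = \tfrac12(\mm{H}_n + \mathbf{1}\mathbf{1}^\top)$ and invoke the matrix determinant lemma, reducing everything to the scalar $1+\mathbf{1}^\top\mm{H}_n^{-1}\mathbf{1} = 2$. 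The two computations are of course close cousins --- adding the all-ones row to every row is a rank-one row operation --- but your packaging is a one-line formula where the extra factor of $2$ appears explicitly as the correction term, while the paper's is more elementary (no inverse, no named lemma) and makes transparent where the powers of two come from. Incidentally, your calculation corrects a small slip in the paper, which concludes with the equality $|\det(\tilde{\mm{H}}_n)| = 2^{-n}n^{n/2}$ despite having scaled only $n-1$ rows; the true value is $2^{1-n}n^{n/2}$, immaterial to the one-sided claim. One wording fix worth making: you justify $\mm{H}_n^\top\mathbf{1} = n\mm{e}_1$ by saying the rows beyond the first are orthogonal to $\mathbf{1}$, but that statement gives $\mm{H}_n\mathbf{1} = n\mm{e}_1$; either appeal to the all-ones first \emph{column} (so the remaining column sums vanish), or simply note that the Sylvester--Walsh matrix is symmetric, which renders the distinction moot.
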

\begin{proof}
    Consider $\mm{H}_n$ and suppose w.l.o.g.\ that its first rows is the all ones row. Add this row to all the other rows. Observe that all the other rows now have entries in $\{0, 2\}$. Scale them down by a factor of two. Adding one row to another does not change the determinant. Scaling a row scales the determinant by the same amount. Since $\left|\det\left(\mm{H}_n\right)\right| = n^{n/2}$, $\left|\det\left(\tilde{\mm{H}}_n\right)\right| = 2^{-n}\cdot n^{n/2}$.   
\end{proof}}

We can now finish the proof of Theorem~\ref{thm:herdisc-ub} for \(\mm{A} \in \{0,1\}^{m\times n}\). If $\mm{A}$ is a constant matrix (i.e., all its entries are equal), the bound is trivial, so we assume otherwise. The upper bound arises from the pair of inequalities $\herdisc(\mm{A}) \lesssim \sqrt{n\dim(\mm{A})}$ and
$\sqrt{\dim(\mm{A})} \lesssim \detlb(\mm{A})$. The former inequality is achieved by a random coloring, as shown in Lemma~\ref{lem:herdisc-vc-ub}. \modified{The latter follows by considering the power matrix $\mm{P}_{\dim(\mm{A})}$, which is a submatrix of $\mm{A}$. Since every $\dim(\mm{A}) \times \dim(\mm{A})$ 0-1 matrix is a submatrix of $\mm{P}_{\dim(\mm{A})}$, we can find also find $\tilde{\mm{H}}_{\dim(\mm{A})}$ as a submatrix of $\mm{P}_{\dim(\mm{A})}$, and, therefore, of $\mm{A}$. By Claim~\ref{claim:01-hadamard-determinant} we know that $\left|\det\left(\tilde{\mm{H}}_{\dim(\mm{A})}\right)\right| \geq 2^{-\dim(\mm{A})}\cdot\left(\dim(\mm{A})\right)^{\dim(\mm{A})/2}$. It follows that
    \[\detlb(\mm{A})  \geq \left|\det\left(\tilde{\mm{H}}_{\dim(\mm{A})}\right)\right|^{1/\dim(\mm{A})} \gtrsim \sqrt{\dim(\mm{A})}.\]
    Thus, the two inequalities together give us $\detlb(\mm{A}) \gtrsim \herdisc(\mm{A})/\sqrt{n}$ as required.}


\end{document}